\documentclass[a4paper, 11pt, twoside]{article}
\usepackage{amsmath,amsthm}
\usepackage{amssymb,latexsym}
\usepackage{mathrsfs}
\usepackage{enumerate}
\usepackage[colorlinks,citecolor=blue,dvipdfm]{hyperref}
\usepackage[numbers,sort&compress]{natbib}
\usepackage{indentfirst}

\DeclareMathAlphabet{\mathpzc}{OT1}{pzc}{m}{it}

\newtheorem{theorem}{Theorem}[section]

\newtheorem{lemma}{Lemma}[section]
\newtheorem{proposition}{Proposition}[section]
\newtheorem{definition}{Definition}[section]
\newtheorem{remark}{Remark}[section]

\theoremstyle{definition} \theoremstyle{remark}
\numberwithin{equation}{section}
\allowdisplaybreaks

\begin{document}
	\markboth{X. Huang, L. Peng, Y. Zhou}{Non-local evolution equations}
	\date{}
	\baselineskip 0.22in
	\title{\bf Non-local evolution equations with L\'{e}vy diffusion: Well-posedness and limiting behavior}
	
	\author{Xi Huang$^{1}$, Li Peng$^{1,2}$, Yong Zhou$^{1,3}$\thanks{\footnotesize {Corresponding author.}} \\[1.8mm]
		\footnotesize  {$^1$Faculty of Mathematics and Computational Science, Xiangtan University}\\
		\footnotesize  {Hunan 411105, China}\\
		\footnotesize  {$^2$Hunan Key Laboratory for Computation and Simulation in Science and Engineering}\\
		\footnotesize  {Xiangtan University, Hunan 411105, China}\\
		\footnotesize  {$^3$Faculty of Information Technology, Macau University of Science and Technology}\\
		\footnotesize  {Macau 999078, China}
	}
	
	\maketitle
	
	\begin{abstract}
		In this note we focus our attention on a class of nonlocal-in-time evolution equations with L\'{e}vy diffusion, they arise as models of unidirectional viscoelastic fluid flow and physical phenomena with memory effect.
		We first consider the existence of the classical solution to a nonlocal linear evolution problem under conditions on the involved memory kernels which allows complete positivity. Then we investigate the limit of this model to a generalized Rayleigh-Stokes equation, as the index of L\'{e}vy diffusion gets concentrated near two, we prove that the solution of nonlocal-in-time problem with L\'{e}vy diffusion uniformly converges to that of the generalized Rayleigh-Stokes equation and reveal the convergence rate.
		Finally, the existence and limiting behavior of the mild solution to a nonlocal evolution problem with nonlinearity are established. The proofs are based on subordination principle and relaxation function theory.\\[2mm]
		{\bf Key words:} Non-local evolution equations, L\'{e}vy diffusion, classical solution, mild solution, limiting behavior.\\[2mm]
		{\bf 2010 MSC:} 35S10; 35B30; 34A12
	\end{abstract}
	
	\section{Introduction}
	This paper is concerned with  nonlocal evolution problem in the whole space $\mathbb R^N$ with $N\geq 1$:
	\begin{align}\label{eq 1.1}
		\begin{cases}
			\partial_{t}v + (-\Delta)^{\frac{\alpha}{2}}v + \gamma \partial_{t}(m \ast (-\Delta)^{\frac{\alpha}{2}} v) =g, \quad t>0, \ x \in \mathbb R^N, \\
			v|_{t=0}=v_{0}(x), \quad  x \in \mathbb R^N,
		\end{cases}
	\end{align}
	where $\gamma>0$, $\alpha \in (1,2]$ and the nonlocal operator $(-\Delta)^{\frac{\alpha}{2}}$ denotes the L\'{e}vy diffusion. $g$ represents the source linear term $g(t,x)$ or nonlinear source $g(v)$. The function $m = m(t)$ is a memory kernel and $\ast$ represents the convolution in time:	
	\begin{align*}
		(m \ast v)(t) = \int_{0}^{t} m(t-\tau)v(\tau)\, \mathrm{d}\tau, \quad t>0.
	\end{align*}
	
	The motivation for our research on problem \eqref{eq 1.1} stems from the similar challenges encountered recently in the modeling of unidirectional viscoelastic fluid flow, anomalous to classical diffusion and physical phenomena with memory effect.
	
	We write $m_{\gamma}(t):= 1+ \gamma m(t), \ t>0$.  The memory kernel $m$ satisfies the condition:
	\begin{itemize}
		\item[$(\mathcal{H})$] $m\in L_{loc}^1(\mathbb R_+)$, and there exist a nonnegative and nonincreasing function $k_{\gamma}\in L_{loc}^1(\mathbb R_+)$ and $\beta_\gamma \geq 0$ such that
		\begin{align}\label{1.2}
			\beta_\gamma m_{\gamma}(t) + (k_{\gamma}\ast m_{\gamma})(t) = 1, \quad t>0.
		\end{align}
	\end{itemize}
	It follows from \cite[Theorem 2.2]{P. Clement 81} that the function $m_\gamma$ is completely positive.

	When $\alpha = 2$, we obtain a generalized Rayleigh-Stokes equation
	\begin{align}\label{1.5}
		\partial_{t}v -\Delta
		v - \gamma \partial_{t}(m \ast \Delta v) =g.
	\end{align}
	There exists a vast literature regarding the equation \eqref{1.5} due to the wide application.
	An extensive use of the eigenvalue expansion method was advocated in many results, this allows analyzing the behavior of solutions for the equations involving the Laplacian operator. For example,
	Tuan et al. \cite{P. T. Tuan} proved the H\"{o}lder continuity and regularity of solutions to the backward problem of the equation \eqref{1.5} with weakly-valued nonlinearity. Later, a more general form than the equation \eqref{1.5} was considered in \cite{T. D. Ke}, and global existence and stability of mild solutions were obtained. In our recent works \cite{L. Peng2024}, well-posedness and asymptotic profile analysis of the solutions to a generalized Rayleigh-Stokes equation were obtained in Besov-Morrey space, the proof was based on Bernsein functions and the theory of Karamata-Feller Tauberian.
	For many further results on
	solvability, regularity and stability in this direction, we can refer to the papers \cite{T. D. Ke 22, D. Van Loi, T. Van Tuan}.
	
	Different forms of the kernel $m$ be covered by our investigations.
	The choice $m(t) = \frac{t^{-\beta}}{\Gamma(1-\beta)}, \ \beta \in (0,1)$, leads to a Rayleigh-Stokes equation with Riemann fractional derivatives, which was introduced in
	\cite{Y. Zhou, Y.Zhou 1}. In this case, it was derived in \cite{F. Shen, M. Khan} for a heated edge or viscoelastic fluid, and  also called a fractional Rayleigh-Stokes equation. 
	In \cite{J. N. Wang}, it is demonstrated the well-posedness and blow up of $\varepsilon-$regular solution.
	Very recently, the well-posedness and asymptotic profile of global solutions in
	Besov spaces can be found  in our paper \cite{L. Peng} from the perspective of resolvent operators.
	
	For the case where $m\in C^1(\mathbb R_+)$, the equation \eqref{1.5} can  be generally written in a more specific form
	\begin{align*}
		\partial_{t}v -(1+\gamma m(0))\Delta v - \gamma m' \ast \Delta v =g.
	\end{align*}
	This model provides a faithful description of heat conduction with memory effect, see\cite{M. Fabrizio, J. R. Cannon, R. Nachlinger, L. Von Wolfersdorf}. In addition, the abstract form of above equation has been studied extensively, see \cite{P. Cannarsa, N. M. Dien, E. Bazhlekova 152}.
	
	When $\alpha \in (1,2)$, the operator $(-\Delta)^{\frac{\alpha}{2}}$ is also known as the fractional Laplacian.
	In the case $m \equiv a_0$, $a_0 \geq 0$ is a positive constant, the equation \eqref{eq 1.1} generalizes the well-known space-fractional diffusion equation
	\begin{align*}
		\partial_t v + (1+a_0\gamma) (-\Delta)^{\frac{\alpha}{2}}v = g.
	\end{align*}
	This model is connected with macroscopic description of transport resulting in superdiffusion phenomenont and there exists a substantial body of literature on the topic.
	Zhai \cite{Z. Zhai} developed several Strichartz type estimates in Besov spaces and Sobolev spaces. Later, Bonforte et al. \cite{M. Bonforte} addressed the existence, uniqueness and regularity of weak solutions to fractional heat equations with the restriction that the initial value is a nonnegative and locally bounded Radon measure.
	Recently, Jarr\'{i}n and Loacham\'{i}n \cite{O. Jarrin} considered global existence of classical solutions to fractional heat equations with a polynomial non-linear term, and derived convergence behavior of solutions from non-local to local case. Notably, Van Dac et al. \cite{N. Van Dac} gave some sufficient conditions ensuring the existence, stability, and H\"{o}lder regularity of solutions under setting in Hilbert scales regarding the case of the kernel $m(t)= \frac{t^{-\beta}}{\Gamma(1-\beta)}$.
	
	Let us now go back to the study on the model \eqref{eq 1.1}.
	Even if we can use the subordinate principle to present the solution operator of the problem \eqref{eq 1.1} as
	\begin{align*}
		S_{\gamma,\alpha}(t)=-\int_0^\infty e^{-\rho(-\Delta)^{\frac{\alpha}{2}}} w_\gamma(t,\mathrm{d}\rho),
	\end{align*}
	where $w_\gamma(t,\tau)$ is called the propagation function associated with the completely positive kernel $m_\gamma$, see \cite[Proposition 4.9]{J. Pruss}. At present, it is generally difficult to use the known properties of the propagation function for further study on the solution operators $S_{\gamma,\alpha}$ apart from mild solutions. However, in our work, we succeeded in the strong continuity, differentiability and spatial regularity of the fundamental solution $Q_{\gamma,\alpha}(t,x)$ in the Sobolev space by the relaxation functions, which removes the obstacle to obtaining classical solutions. Moreover, asymptotic estimates of the fundamental solution $Q_{\gamma,\alpha}$ with respect to the diffusion parameter $\alpha$ are given, this also implies that the relaxation function is a powerful tool to deal with the dependence on parameters of solutions to abnormal diffusion problems.
	Inspired by existing works, in this paper, we consider the existence of classical/mild  solutions to nonlocal evolution problem
	\begin{align}\label{neweq 1.1}
		\begin{cases}
			\partial_{t}v_\alpha + (-\Delta)^{\frac{\alpha}{2}}
			v_\alpha + \gamma \partial_{t}(m \ast (-\Delta)^{\frac{\alpha}{2}} v_\alpha) =g, \quad t>0, \ x \in \mathbb R^N, \\
			v_{\alpha}|_{t=0}=v_{0,\alpha}(x), \quad  x \in \mathbb R^N,
		\end{cases}
	\end{align}
	where $v_{0,\alpha}$ and $v_\alpha$ represent the initial condition and the solution, respectively, corresponding to the parameter $\alpha$.
	The second results considers the limit of solutions to the problem \eqref{neweq 1.1} when $\alpha\to 2^-$. The limit solves the following equation
	\begin{align}\label{new1.5}
		\partial_{t}v -\Delta
		v - \gamma \partial_{t}(m \ast \Delta v) =g,~~~v|_{t=0}=v_{0,2}(x),
	\end{align}
	interpreted in terms of mild formulation under the assumption that $v_{0,\alpha}\to v_{0,2}$ in space $H^s$.
	
	The rest of this paper is organized as follows. In Section 2, we list some notions, relaxation function and fundamental solutions that are needed in the sequel, and then we show that the fundamental solution $Q_{\gamma,\alpha}$ is substantively a probability density function of a random process.  In Section 3, we obtain the smooth properties of relaxation function, then estimates and continuity of fundamental solutions are given in Sobolev spaces. In Section 4, we establish the existence and convergence of classical solutions to the linear problem. Finally, the existence and convergence of mild solutions to the nonlinear problem are derived.
	
	\section{Preliminaries}
	
	\subsection{Notations}
	We first introduce some notation used throughout the paper. Let $f_1 \in \mathcal{S}(\mathbb R^N)$, $\tilde f_1$ denotes the Fourier transform of, and $\mathcal F^{-1}$ denotes the inverse Fourier transform. For $f_2:(0, \infty) \to \mathbb R$, $\widehat{f_2}$ denotes Laplace transform and $\mathcal L^{-1}(f_2)(\lambda)$ denotes the inverse Laplace transform. Note $H^s=:H^s(\mathbb R^N)$ and ${\dot H}^s=:{\dot H}^s(\mathbb R^N)$ as non-homogeneous and homogeneous fractional Sobolev spaces, respectively.
	
	Let $1<\alpha \leq 2$. The operator $(-\Delta)^{\frac{\alpha}{2}}:\mathcal{S}(\mathbb R^N) \to \mathcal{S}(\mathbb R^N)$ is given by means of the Fourier multiplier
	\begin{align*}
		(-\Delta)^{\frac{\alpha}{2}} f_1 = \mathcal F^{-1}(|\xi|^\alpha \tilde{f_1}(\xi))(x).
	\end{align*}
	It is well-known that $(-\Delta)^{\frac{\alpha}{2}}$ generates a $C_0$-semigroup $\{e^{-t(-\Delta)^{\frac{\alpha}{2}}}\}_{t\geq0}$ on $L^2(\mathbb R^N)$ for $\alpha\in(1, 2]$, which can be represented as
	\begin{align*}
		e^{-t(-\Delta)^{\frac{\alpha}{2}}}v=[H_\alpha(t,\cdot)\star v](x), \quad t>0,~x\in\mathbb R^N,
	\end{align*}
	where $\star$ is the convolution in $\mathbb R^N$ and the function $H_\alpha(t,x)$ is the fundamental solution of the homogeneous space-fractional heat equation
	\begin{align*}
		\partial_tH_\alpha + (-\Delta)^{\frac{\alpha}{2}} H_\alpha = 0, \quad t>0.
	\end{align*}
	The Fourier transform of $H_\alpha(t,x)$ is $\tilde{H_\alpha}(t,\xi) = e^{-t|\xi|^\alpha}$.
	Moreover, $H_\alpha(t,x)$ is positive and smooth. Specially, $H_2(t,x)$ is Guassian heat kernel for $\alpha=2$.
	
	\subsection{Relaxation function}	
	For $\nu \geq 0$, we consider the Volterra integral equation
	\begin{align}
		&s_\gamma(t,\nu) + \nu m_\gamma \ast s_\gamma(\cdot,\nu)(t)=1,\quad t \geq 0,\label{2.1} \\
		&r_\gamma(t,\nu) + \nu m_\gamma \ast r_\gamma(\cdot,\nu)(t)=m_\gamma(t),\quad t>0. \label{2.2}
	\end{align}
	The condition $(\mathcal H)$ shows that $m_\gamma$ is complete positive, which also ensures the nonnegativity of $m_\gamma$, see \cite[Proposition 2.1]{P. Clement 81}.
	As mentioned in \cite[Definition 1.1]{P. Clement 81} and \cite[Proposition 4.5]{J. Pruss},
	the complete positivity of $m_\gamma$ insures the nonnegativity of $s_\gamma(t,\nu)$ and $r_\gamma(t,\nu)$, here $s_\gamma(t,\nu)$ and $r_\gamma(t,\nu)$ are the unique solution of \eqref{2.1} and \eqref{2.2}, respectively.
	Moreover, several properties of the relaxation functions $s_\gamma(t, \nu)$ and $r_\gamma(t,\nu)$ are collected.
	\begin{proposition}\label{prop 2.1}
		Assume $m\in(\mathcal{H})$ and $\nu \geq 0$. Then we have the following results.
		\begin{itemize}
			\item[(i)] Let $f \in L_{loc}^1(\mathbb R_+)$ and $t>0$. The Volterra integral equation
			$b_\gamma(t, \nu) + \nu m_{\gamma} \ast b_\gamma(t, \nu) = f(t) $
			admits a unique solution
			$b_\gamma(t,\nu) = f(t) - \nu r_\gamma(\cdot, \nu) \ast f(t)$. If $f \in C(\mathbb R_+)$, then $b_\gamma(\cdot) \in C(\mathbb R_+)$ and the previous form of $b_\gamma(t)$ also holds for $t=0$.
			\item[(ii)] The function $s_\gamma(t,\nu)$ has the following representation
			\begin{align*}
				s_\gamma(t, \nu) = 1 - \nu \int_0^t r_\gamma(\tau, \nu)\, \mathrm{d}\tau, \quad t \geq 0,
			\end{align*}
			which implies that $s_\gamma(\cdot,\nu)$  is continuous on $\mathbb R_+$ and differentiable on $(0,\infty)$.
			\item[(iii)] For $t >0$, it holds
			\begin{align*}
				0\leq& r_\gamma(t,\nu) \leq 1+\gamma m(t),\\
				\frac{1+\beta_\gamma \nu[k_\gamma(t)]^{-1}r_\gamma(t,\nu)}{1+\nu [k_\gamma(t)]^{-1}} \leq&  s_\gamma(t,\nu) \leq \frac{1}{1+\nu[t+\gamma(1\ast m)(t)]}
				.
			\end{align*}
			Moreover if $m$ is nonincreasing, then
			\begin{align}\label{new(2.1)}
				r_\gamma(t,\nu) \leq \frac{1+\gamma m(t)}{1+\nu[t+\gamma(1\ast m)(t)]}.
			\end{align}
			
			\item[(iv)] For $t >0$, the functions $s_\gamma(t,\nu)$ and $r_\gamma(t,\nu)$ can be expressed separately as
			\begin{align*}
				s_\gamma(t, \nu)=-\int_{0}^{\infty} e^{-\rho \nu} w_\gamma(t, \mathrm{d}\rho) \quad {\it and}~~
				r_\gamma(t, \nu)=\int_{0}^{\infty} e^{-\rho \nu} \eta_\gamma(t, \mathrm{d}\rho),
			\end{align*}
			where $w_\gamma(t,\rho)$ is the propagation function associated with the kernel $m_{\gamma}$, and $-w_\gamma(t, \mathrm{d} \rho)$ and $\eta_\gamma(t, \mathrm{d}\rho)$ are positive finite measures satisfying $$-\int_0^\infty w_\gamma(t,\mathrm{d}\rho)=1, ~~~~ \int_0^\infty \eta_\gamma(t,\mathrm{d}\rho)=m_{\gamma}(t),  \quad t>0.$$
		\end{itemize}
	\end{proposition}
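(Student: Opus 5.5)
The plan is to take the resolvent/Volterra theory of Clément--Nohel and Prüss as the backbone, together with the complete positivity of $m_\gamma$ guaranteed by $(\mathcal H)$.

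\emph{Part (i).} Since $m_\gamma\in L^1_{loc}$, the equation $b+\nu m_\gamma\ast b=f$ has a unique $L^1_{loc}$ solution by standard Volterra theory (contraction on $[0,T]$ in an exponentially weighted $L^1$ norm). To identify it, I would check directly that $b=f-\nu r_\gamma\ast f$ is a solution. Convolving \eqref{2.2} with $f$ gives $r_\gamma\ast f+\nu m_\gamma\ast r_\gamma\ast f=m_\gamma\ast f$, so
\[
b+\nu m_\gamma\ast b=f-\nu r_\gamma\ast f+\nu m_\gamma\ast f-\nu^2 m_\gamma\ast r_\gamma\ast f=f .
\]
Continuity for continuous $f$ follows because $r_\gamma\in L^1_{loc}$ and $f\in C$ imply $r_\gamma\ast f\in C(\mathbb R_+)$ with value $0$ at $t=0$.

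\emph{Part (ii).} Apply (i) with $f\equiv1$; this yields $s_\gamma=1-\nu(1\ast r_\gamma)$, which is continuous. Differentiability on $(0,\infty)$ follows from the fundamental theorem of calculus once $r_\gamma$ is continuous on $(0,\infty)$. I would argue this via (i) with $f=m_\gamma$, using the local regularity of $m_\gamma$ that comes from \eqref{1.2}. If that is too delicate, I would settle for a.e. differentiability.

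\emph{Part (iii).} Nonnegativity of $r_\gamma$ and $s_\gamma$ is the complete positivity statement. For the bound $r_\gamma\le m_\gamma$, I would use \eqref{2.2} in the form $r_\gamma=m_\gamma-\nu m_\gamma\ast r_\gamma$ together with $m_\gamma\ge0$ and $r_\gamma\ge0$. For the upper bound on $s_\gamma$, note that $s_\gamma$ is nonincreasing, since $s_\gamma'=-\nu r_\gamma\le0$. Then \eqref{2.1} gives
\[
1=s_\gamma(t)+\nu\int_0^t m_\gamma(t-\tau)s_\gamma(\tau)\,d\tau\ \ge\ s_\gamma(t)\bigl(1+\nu(1\ast m_\gamma)(t)\bigr),
\]
with $1\ast m_\gamma=t+\gamma(1\ast m)$.

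For the lower bound on $s_\gamma$, I would use \eqref{1.2}. Convolving \eqref{2.1} with $k_\gamma$ and using $k_\gamma\ast m_\gamma=1-\beta_\gamma m_\gamma$ gives
\[
k_\gamma\ast s_\gamma+\nu\bigl(1\ast s_\gamma-\beta_\gamma m_\gamma\ast s_\gamma\bigr)=1\ast k_\gamma .
\]
Next I would use $m_\gamma\ast s_\gamma=1\ast r_\gamma$, obtained by comparing the two Volterra equations, and then differentiate. Monotonicity of $k_\gamma$ then gives $k_\gamma\ast s_\gamma'$-type inequalities of the form $(k_\gamma(t)+\nu)s_\gamma(t)\ge k_\gamma(t)+\nu\beta_\gamma r_\gamma(t)$, as in Prüss, Prop.~4.5. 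Dividing by $k_\gamma(t)+\nu$ gives the stated lower bound.

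For \eqref{new(2.1)} with $m$ nonincreasing, $m_\gamma$ is nonincreasing, and the comparison argument can be repeated. Since $r_\gamma(\tau)\ge r_\gamma(t)\,m_\gamma(t)/m_\gamma(\tau)$ is not immediate, I would instead use the known fact that $r_\gamma/m_\gamma$-type monotonicity holds. Alternatively, I would write $r_\gamma=m_\gamma s_\gamma$-like bounds via the representation $r_\gamma=-\partial_t s_\gamma/\nu$ and the inequality $r_\gamma\le m_\gamma(t)s_\gamma(t)$. That inequality follows from $r_\gamma=m_\gamma(t)-\nu\int_0^t m_\gamma(t-\tau)\ldots$ after swapping roles, using \eqref{2.2} as $r_\gamma=m_\gamma-\nu\int_0^t r_\gamma(\tau)m_\gamma(t-\tau)d\tau\le m_\gamma(t)\bigl(1-\nu\int_0^t r_\gamma\bigr)=m_\gamma(t)s_\gamma(t)$, since $m_\gamma(t-\tau)\ge m_\gamma(t)$. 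Combining with the upper bound for $s_\gamma$ gives \eqref{new(2.1)}.

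\emph{Part (iv).} This is Bernstein-function theory. Complete positivity implies that $\nu\mapsto s_\gamma(t,\nu)$ and $\nu\mapsto r_\gamma(t,\nu)$ are completely monotone, as in Prüss, Prop.~4.9. Bernstein's theorem then gives the Laplace–Stieltjes representations. The total masses are read off at $\nu=0$: $s_\gamma(t,0)=1$ and $r_\gamma(t,0)=m_\gamma(t)$.

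\emph{Main obstacle.} I expect the main obstacle to be the lower bound in (iii), which needs the careful differentiation with the kernel $k_\gamma$. I would follow Prüss's proof closely there.
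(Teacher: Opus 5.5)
Your proposal is correct and follows essentially the paper's route: (i) and (iv) rest on standard Volterra theory and on Bernstein's theorem for the completely monotone maps $\nu\mapsto s_\gamma, r_\gamma$ (both of which the paper simply cites), (ii) is (i) with $f\equiv 1$, and the lower bound in (iii) uses the same identity $s_\gamma=\beta_\gamma r_\gamma+k_\gamma\ast r_\gamma$ together with the monotonicity of $k_\gamma$ (you reach the identity by convolving with $k_\gamma$ and differentiating, the paper by a uniqueness argument). Your explicit arguments for the upper bound on $s_\gamma$ and for \eqref{new(2.1)} via $r_\gamma(t,\nu)\le m_\gamma(t)s_\gamma(t,\nu)$ fill in what the paper delegates to the literature, and your caution about everywhere differentiability in (ii) is warranted: that claim needs continuity of $r_\gamma(\cdot,\nu)$, which the paper only establishes later, in Lemma \ref{newprop 2.2}, under additional hypotheses.
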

	\begin{proof}
		The justification for (i) and (iv) can be founded in
		\cite[Chapter 2, Theorem 3.5]{G. Gripenberg} and \cite[Theorem 1]{P. Clement 79}, respectively. The statement (ii) follows by the statement (i) for $f\equiv1$.
		
		For (iii), the first inequality is obvious for the nonnegativity of $m$ and $r_\gamma(t,\nu)$.
		The right-handed side of the second inequality stems from the proof of
		\cite[Proposition 2.1]{P. Clement 81}. It remains to prove the left-handed side of the second inequality. Using \eqref{1.2} and \eqref{2.1}, we have
		\begin{align*}
			\beta_\gamma r_\gamma(t,\nu) + (k_\gamma \ast r_\gamma(\cdot,\nu))(t)+ \nu \left(m_\gamma \ast \left(\beta_\gamma r_\gamma(\cdot,\nu)+ k_\gamma \ast r_\gamma(\cdot,\nu) \right)\right)(t)=1,\quad t \geq 0,
		\end{align*}
		this together with the uniqueness of solutions to \eqref{2.2} shows
		\begin{align*}
			s_\gamma(t,\nu) = \beta_\gamma r_\gamma(t,\nu) + (k_\gamma \ast r_\gamma(\cdot,\nu))(t), \quad t\geq 0.
		\end{align*}
		Since $\partial_t s_\gamma(t,\nu) = -\nu r_\gamma(t,\nu) <0$ for a.e. $t \in (0,\infty)$		(due to (ii)) and the function $k_\gamma$ is nonincreasing, we have
		\begin{align*}
			\nu s_\gamma(t,\nu) &= \beta_\gamma \nu r_\gamma(t,\nu) -\int_0^t k_\gamma(t-\tau)\partial_\tau s_\gamma(\tau,\nu)\, \mathrm{d}\tau\\
			&\geq \beta_\gamma \nu r_\gamma(t,\nu)-k_\gamma(t) \int_0^t \partial_\tau s_\gamma(\tau,\nu)\, \mathrm{d}\tau\\
			&= \beta_\gamma \nu r_\gamma(t,\nu)+k_\gamma(t)(1 - s_\gamma(t,\nu)).
		\end{align*}
		that for $t\geq 0$. Therefore, the left-handed side of the second inequality holds.
		Finally, the inequality \eqref{new(2.1)} follows by the similar proof of
		\cite[Lemma 5.4]{J.C. Pozo}.
	\end{proof}
	
	\subsection{Representation of fundamental solutions}	
	Consider the homogeneous case of the equation \eqref{eq 1.1}. Let the function $Q_{\gamma, \alpha}(t, x)$ be the fundamental solution to the following equations:
	\begin{align}\label{2.3}
		\begin{cases}
			\partial_{t}Q_{\gamma,\alpha} + (-\Delta)^{\frac{\alpha}{2}} Q_{\gamma,\alpha} + \gamma \partial_{t}(m \ast (-\Delta)^{\frac{\alpha}{2}} Q_{\gamma,\alpha}) =0, \quad t>0,\ x \in \mathbb R^N, \\
			Q_{\gamma,\alpha}|_{t=0}=\delta_0(x), \quad x \in \mathbb R^N,
		\end{cases}
	\end{align}
	where $\delta_0$ is the Dirac measure. Then $\tilde{Q}_{\gamma,\alpha}$ solves
	\begin{align*}
		\begin{cases}
			\partial_{t}\tilde{Q}_{\gamma,\alpha}(t,\xi) + |\xi|^{\alpha} \partial_t m_\gamma\ast \tilde{Q}_{\gamma,\alpha}(t, \xi) =0, \quad t>0, \ \xi \in \mathbb R^N, \\
			\tilde{Q}_{\gamma,\alpha}(0,\xi)=1, \quad \xi \in \mathbb R^N,
		\end{cases}
	\end{align*}
	which is equivalent to the following Volterra equation:
	\begin{align}\label{2.4}
		\tilde{Q}_{\gamma,\alpha}(t,\xi) + |\xi|^{\alpha} m_\gamma \ast \tilde{Q}_{\gamma,\alpha}(t, \xi) =1, \quad t > 0, \ \xi \in \mathbb R^N.
	\end{align}
	By Proposition \ref{prop 2.1}(v) we have
	\begin{align}\label{2.5}
		\tilde{Q}_{\gamma, \alpha}(t, \xi) = s_\gamma(t,|\xi|^\alpha)= -\int_{0}^{\infty} e^{-\rho |\xi|^\alpha} w_\gamma(t, \mathrm{d}\rho),
	\end{align}
	using this and the fact that $\tilde{H_\alpha}(t,\xi)=e^{-t|\xi|^\alpha}$, we derive
	\begin{align}\label{2.6}
		Q_{\gamma, \alpha}(t,x)  =-\int_0^\infty H_\alpha(\rho,x) w_\gamma(t,\mathrm{d}\rho), \quad t > 0, \ x \in \mathbb R^N.
	\end{align}
	
	On the other hand, we note that the equation \eqref{eq 1.1} is equivalent to the Volterra equation
	\begin{align*}
		v_{\alpha} + m_\gamma \ast (-\Delta)^{\frac{\alpha}{2}}v_{\alpha} = v_{0,\alpha}+1\ast g.
	\end{align*}
	From the variation of parameters formula for Volterra equation, the equation \eqref{eq 1.1} has a solution of the form
	\begin{align*}
		v_\alpha(t, x)= Q_{\gamma,\alpha}(t,\cdot) \star v_{0,\alpha} + \int_0^t Q_{\gamma,\alpha}(t-\tau, \cdot) \star g(\tau,v_\alpha) \mathrm{d}\tau.
	\end{align*}
	
	Now we define the function
	$G_{\gamma,\alpha}(t, x)=\int_{0}^{\infty} H_\alpha(\rho, x) \eta_\gamma(t, \mathrm{d}\rho)$ for $t>0, \ x \in \mathbb R^N$.
	It follows from Proposition \ref{2.1}(iv) that
	\begin{align}\label{2.7}
		\tilde G_{\gamma,\alpha}(t, \xi)=r_{\gamma}(t, |\xi|^\alpha)=\int_{0}^{\infty} e^{-\rho |\xi|^\alpha} \eta_\gamma(t, \mathrm{d}\rho), \quad t>0.
	\end{align}
	
	For the two functions $Q_{\gamma,\alpha}$ and $G_{\gamma,\alpha}$, the following assertions holds:
	\begin{proposition}\label{prop 2.2}
		Let $(\mathcal{H})$ hold. then for $\beta_\gamma$ and $k_\gamma$ given as in $(\mathcal{H})$,
		\begin{itemize}
			\item[(i)] the function $G_{\gamma,\alpha}(t,x)$ is a solution of the Volterra integral equation
			\begin{align*}
				\beta_\gamma G_{\gamma,\alpha}(t,x)+k_\gamma\ast G_{\gamma,\alpha}(\cdot,x)(t)= Q_{\gamma,\alpha}(t,x) \quad{\it for}~ t>0, \text{~a.e.~} x \in \mathbb R^N.
			\end{align*}
			
			\item[(ii)]$1 \ast G_{\gamma,\alpha}(\cdot, x)(t) =m_\gamma \ast Q_{\gamma,\alpha}(\cdot,x)(t) \quad{\it for}~ t>0, \text{~a.e.~} x \in \mathbb R^N.$
		\end{itemize}
	\end{proposition}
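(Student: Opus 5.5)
The plan is to prove both identities on the Fourier side, where they reduce to identities for the relaxation functions $s_\gamma$ and $r_\gamma$ of Proposition \ref{prop 2.1}, and then invert the Fourier transform in $x$. By \eqref{2.5} and \eqref{2.7}, for each fixed $t>0$ we have $\tilde Q_{\gamma,\alpha}(t,\xi)=s_\gamma(t,|\xi|^\alpha)$ and $\tilde G_{\gamma,\alpha}(t,\xi)=r_\gamma(t,|\xi|^\alpha)$. Both are bounded in $\xi$: $0\le s_\gamma\le 1$ and $0\le r_\gamma\le m_\gamma(t)$ by Proposition \ref{prop 2.1}(iii). Since time convolution commutes with the spatial Fourier transform, it suffices to prove, for every $\nu\ge 0$,
\begin{align*}
\beta_\gamma r_\gamma(t,\nu)+(k_\gamma\ast r_\gamma(\cdot,\nu))(t)=s_\gamma(t,\nu),\qquad (1\ast r_\gamma(\cdot,\nu))(t)=(m_\gamma\ast s_\gamma(\cdot,\nu))(t),
\end{align*}
and then to set $\nu=|\xi|^\alpha$.

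For (i), the identity $s_\gamma=\beta_\gamma r_\gamma+k_\gamma\ast r_\gamma$ was already derived inside the proof of Proposition \ref{prop 2.1}(iii). To recall it: put $b:=\beta_\gamma r_\gamma+k_\gamma\ast r_\gamma$ and convolve \eqref{2.2} with $\beta_\gamma\delta+k_\gamma$. Using \eqref{1.2} in the form $(\beta_\gamma\delta+k_\gamma)\ast m_\gamma=1$, this gives $b+\nu m_\gamma\ast b=1$. Uniqueness for \eqref{2.1}, via Proposition \ref{prop 2.1}(i), then forces $b=s_\gamma$.

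For (ii), Proposition \ref{prop 2.1}(ii) gives $\nu\,(1\ast r_\gamma)=1-s_\gamma$, while \eqref{2.1} gives $\nu\,(m_\gamma\ast s_\gamma)=1-s_\gamma$. For $\nu>0$, dividing by $\nu$ yields $1\ast r_\gamma=m_\gamma\ast s_\gamma$. For $\nu=0$ we have $s_\gamma\equiv1$ and $r_\gamma=m_\gamma$, so both sides equal $1\ast m_\gamma$. Alternatively, one can convolve \eqref{2.2} with $1$ and \eqref{2.1} with $m_\gamma$, then use commutativity of convolution and uniqueness.

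The step needing the most care is the passage back to physical space, that is, justifying
\begin{align*}
\mathcal F\big[(k_\gamma\ast G_{\gamma,\alpha}(\cdot,x))(t)\big](\xi)=(k_\gamma\ast r_\gamma(\cdot,|\xi|^\alpha))(t),
\end{align*}
and likewise for $1\ast G_{\gamma,\alpha}$ and $m_\gamma\ast Q_{\gamma,\alpha}$. I would use the representations \eqref{2.6} and $G_{\gamma,\alpha}=\int_0^\infty H_\alpha(\rho,\cdot)\,\eta_\gamma(t,\mathrm d\rho)$. Since $H_\alpha\ge0$ and $\int H_\alpha(\rho,x)\,\mathrm dx=1$, Tonelli gives $\|G_{\gamma,\alpha}(t,\cdot)\|_{L^1}=m_\gamma(t)$ and $\|Q_{\gamma,\alpha}(t,\cdot)\|_{L^1}=1$, using the masses in Proposition \ref{prop 2.1}(iv). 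Hence
\begin{align*}
\int_0^t\!\!\int_{\mathbb R^N}k_\gamma(t-\tau)\,G_{\gamma,\alpha}(\tau,x)\,\mathrm dx\,\mathrm d\tau=(k_\gamma\ast m_\gamma)(t)\le 1<\infty ,
\end{align*}
with similar bounds $1\ast m_\gamma$ and $m_\gamma\ast1$ for the other two convolutions. Fubini therefore lets the Fourier transform pass through the time convolution, assuming joint measurability of $(\tau,x)\mapsto G_{\gamma,\alpha}(\tau,x)$, which follows from the measurability of the measures $\eta_\gamma(\tau,\cdot)$ in $\tau$. Both sides of each identity are then $L^1(\mathbb R^N)$ functions of $x$ with the same Fourier transform, so by injectivity of the Fourier transform on $L^1$ they agree for a.e. $x$. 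This is exactly the "a.e. $x$" form stated in the proposition.
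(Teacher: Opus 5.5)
Your proof is correct, but it obtains the two Fourier-side identities differently from the paper. The paper takes the Laplace transform in $t$ of \eqref{2.2} and \eqref{2.4}. This gives $\widehat{\tilde Q}_{\gamma,\alpha}=1/(\lambda(1+|\xi|^\alpha\widehat m_\gamma))$ and $\widehat{\tilde G}_{\gamma,\alpha}=\widehat m_\gamma/(1+|\xi|^\alpha\widehat m_\gamma)$. It rewrites $(\mathcal H)$ as $\widehat m_\gamma(\lambda)(\beta_\gamma+\widehat k_\gamma(\lambda))=1/\lambda$ and checks algebraically that $\widehat{\tilde Q}_{\gamma,\alpha}=(\beta_\gamma+\widehat k_\gamma)\widehat{\tilde G}_{\gamma,\alpha}$ and $\lambda^{-1}\widehat{\tilde G}_{\gamma,\alpha}=\widehat m_\gamma\widehat{\tilde Q}_{\gamma,\alpha}$. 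It then simply says to invert both transforms.

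You stay in the time domain instead. For (i) you reuse the identity $s_\gamma=\beta_\gamma r_\gamma+k_\gamma\ast r_\gamma$, already obtained from uniqueness for \eqref{2.1} in the proof of Proposition \ref{prop 2.1}(iii). For (ii) you compare $\nu(1\ast r_\gamma)=1-s_\gamma$ with $\nu(m_\gamma\ast s_\gamma)=1-s_\gamma$ and treat $\nu=0$ separately.

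The Laplace route is a one-line computation. However, it tacitly requires that $m_\gamma,k_\gamma,s_\gamma,r_\gamma$ be Laplace transformable on $\mathrm{Re}\,\lambda>0$ and that the transform be injective. The paper never verifies this, although it does hold under $(\mathcal H)$. Your route needs only $L^1_{loc}$ data and Volterra uniqueness.

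You also supply something the paper omits: a justification for passing the spatial Fourier transform through the time convolutions. You get it from $\|Q_{\gamma,\alpha}(t,\cdot)\|_{L^1}=1$, $\|G_{\gamma,\alpha}(t,\cdot)\|_{L^1}=m_\gamma(t)$ and $k_\gamma\ast m_\gamma\le 1$. This, together with injectivity of the Fourier transform on $L^1$, is exactly what produces the ``a.e.\ $x$'' form of the statement.
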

	\begin{proof}
		(i)	We take the Laplace transform of the equations \eqref{2.2} and \eqref{2.4} to obtain
		\begin{align}\label{2.8}
			\widehat {\tilde Q}_{\gamma,\alpha}(\lambda, \xi) = \frac{1}{\lambda+|\xi|^\alpha \lambda \widehat m_\gamma(\lambda) }, \quad
			\widehat {\tilde G}_{\gamma,\alpha}(\lambda, \xi) = \frac{\widehat m_\gamma(\lambda)}{1+|\xi|^\alpha  \widehat m_\gamma(\lambda) }, \quad Re \lambda>0, \ \xi \in \mathbb R^N.
		\end{align}
		In view of $m \in (\mathcal {H})$, one can see
		\begin{align*}
			\widehat m_\gamma(\lambda)\left(\beta+\widehat k_\gamma(\lambda)\right)=\frac{1}{\lambda}, \quad Re \lambda >0,
		\end{align*}
		This shows
		$$
		\widehat {\tilde Q}_{\gamma,\alpha}(\lambda, \xi)-\left(\beta+\widehat {k}_\gamma(\lambda)\right)\widehat {\tilde G}_{\gamma,\alpha}(\lambda, \xi)=0.
		$$
		Then $(i)$ follows by the inverse Laplace transform and inverse Fourier transform.
		
		(ii)  In view of
		$$\frac{1}{\lambda} \widehat {\tilde{G}}_{\gamma,\alpha}(\lambda, \xi)-\widehat m_\gamma(\lambda)\widehat {\tilde Q}_{\gamma,\alpha}(\lambda, \xi)=0,
		$$
		(ii) follows by the similar arguments.
	\end{proof}
	
	Next, from a measure perspective we show that the fundamental solution $Q_{\gamma,\alpha}$ is substantively the probability density function of a random process for arbitrary $N \in \mathbb N^+$.
	\begin{theorem}\label{the 2.1}
		Let $m$ satisfy $(\mathcal H)$ and $\alpha \in (1,2]$. Then the fundamental solution $Q_{\gamma,\alpha}(t,x)$ of the problem
		\eqref{eq 1.1} can be interpreted as the probability density function of a random process $Y_{\gamma,\alpha}(t)$. Moreover, mean square displacement of $Y_{\gamma,2}$ is
		\begin{align*}
			msd_{\gamma,2}(t)= 2 \left(t+\gamma(1 \ast m)(t)\right)~~{\it for}~\ t > 0.
		\end{align*}
	\end{theorem}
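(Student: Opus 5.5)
The plan is to show that $Q_{\gamma,\alpha}(t,\cdot)$ is a nonnegative integrable function of total mass one, and then to compute its second moment for $\alpha=2$ from the Fourier side.

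\emph{Probability density.} Fix $t>0$ and start from the subordination formula \eqref{2.6},
\[
Q_{\gamma,\alpha}(t,x)=-\int_0^\infty H_\alpha(\rho,x)\,w_\gamma(t,\mathrm{d}\rho).
\]
By Proposition \ref{prop 2.1}(iv), $-w_\gamma(t,\mathrm{d}\rho)$ is a positive finite measure of total mass $1$. Since $H_\alpha(\rho,x)>0$, we get $Q_{\gamma,\alpha}(t,x)\geq 0$. Tonelli's theorem then lets us interchange the integrals:
\[
\int_{\mathbb R^N}Q_{\gamma,\alpha}(t,x)\,\mathrm{d}x=-\int_0^\infty\Big(\int_{\mathbb R^N}H_\alpha(\rho,x)\,\mathrm{d}x\Big)w_\gamma(t,\mathrm{d}\rho)=1,
\]
because $\int H_\alpha(\rho,x)\,\mathrm{d}x=\tilde H_\alpha(\rho,0)=1$ for $\rho>0$. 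The same conclusion follows from \eqref{2.5}, since $\tilde Q_{\gamma,\alpha}(t,0)=s_\gamma(t,0)=1$ by \eqref{2.1} with $\nu=0$.

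\emph{The random process.} Let $Y_{\gamma,\alpha}(t)$ be any random variable with law $Q_{\gamma,\alpha}(t,x)\,\mathrm{d}x$; Kolmogorov's extension gives a process with these marginals. Equivalently, $Y_{\gamma,\alpha}(t)=X_\alpha(E_\gamma(t))$, where $X_\alpha$ is the symmetric $\alpha$-stable process with density $H_\alpha$ and $E_\gamma(t)$ is an independent random time with law $-w_\gamma(t,\mathrm{d}\rho)$. This step is soft; I only have to check that $\rho\mapsto H_\alpha(\rho,x)$ is measurable and that the measure charges no mass where $H_\alpha$ is undefined. An atom at $\rho=0$ would produce a $\delta_0$ component. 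I would handle this by noting that the mass of $-w_\gamma(t,\cdot)$ at $\{0\}$ equals $\lim_{\nu\to\infty}s_\gamma(t,\nu)$. The upper bound in Proposition \ref{prop 2.1}(iii) forces this limit to be $0$, because $t+\gamma(1\ast m)(t)>0$ for $t>0$.

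\emph{Mean square displacement.} Set $\alpha=2$ and use
\[
\mathbb E|Y|^2=\int|x|^2Q_{\gamma,2}(t,x)\,\mathrm{d}x=-\Delta_\xi\tilde Q_{\gamma,2}(t,\xi)\big|_{\xi=0}.
\]
Writing $\tilde Q_{\gamma,2}(t,\xi)=s_\gamma(t,|\xi|^2)$ and differentiating, the value at $\xi=0$ is $-2N\,\partial_\nu s_\gamma(t,0)$. By Proposition \ref{prop 2.1}(ii), $\partial_\nu s_\gamma(t,\nu)|_{\nu=0}=-\int_0^t r_\gamma(\tau,0)\,\mathrm{d}\tau$. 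By \eqref{2.2} we have $r_\gamma(\cdot,0)=m_\gamma$, so this derivative equals $-(1\ast m_\gamma)(t)=-(t+\gamma(1\ast m)(t))$.

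This yields total msd $2N(t+\gamma 1\ast m)$, which reduces to the stated $2(t+\gamma(1\ast m)(t))$ per coordinate (or for $N=1$). I would state it as the msd of each component, $\int x_1^2Q\,\mathrm{d}x$.

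\emph{Main obstacle.} The delicate point is justifying differentiation under the measure at $\nu=0$, i.e.\ finiteness of the first moment $-\int_0^\infty\rho\,w_\gamma(t,\mathrm{d}\rho)$. I would avoid it by computing directly via subordination:
\[
\int|x|^2H_2(\rho,x)\,\mathrm{d}x=2N\rho,
\qquad\text{so}\qquad
\mathrm{msd}=-2N\int_0^\infty\rho\,w_\gamma(t,\mathrm{d}\rho).
\]
By monotone convergence, this first moment equals $\lim_{\nu\to0^+}(1-s_\gamma(t,\nu))/\nu=\lim_{\nu\to0^+}\int_0^t r_\gamma(\tau,\nu)\,\mathrm{d}\tau$. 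That limit equals $1\ast m_\gamma(t)$ by dominated convergence, using the bound $r_\gamma\leq m_\gamma$ from Proposition \ref{prop 2.1}(iii) and the continuity $r_\gamma(\tau,\nu)\to m_\gamma(\tau)$, which follows from the resolvent representation in (i).
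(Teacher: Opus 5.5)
Your argument is correct, but it uses a different method from the paper. The paper passes to the Laplace variable in $t$. It gets $\int_{\mathbb R^N}Q_{\gamma,\alpha}(t,x)\,\mathrm{d}x=1$ from $\widehat{\tilde Q}_{\gamma,\alpha}(\lambda,0)=1/\lambda$. For the second moment it writes $|x|^2=-\Delta_\xi e^{-ix\cdot\xi}|_{\xi=0}$, moves $-\Delta_\xi$ outside the $(t,x)$-integral, differentiates the explicit symbol $1/(\lambda+|\xi|^2\lambda\widehat m_\gamma(\lambda))$ from \eqref{2.8}, and inverts.

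You instead stay at a fixed $t$ and work with the subordinating measure $-w_\gamma(t,\mathrm{d}\rho)$ in \eqref{2.6}. Tonelli gives the mass. The identity $\int|x|^2H_2(\rho,x)\,\mathrm{d}x=2N\rho$ turns the msd into the first moment of that measure. Monotone convergence together with Proposition~\ref{prop 2.1}(ii) and \eqref{2.2} then identifies this moment as $(1\ast m_\gamma)(t)$.

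The paper's computation is shorter and purely algebraic, but it leaves two points open. Moving $\Delta_\xi$ through the integral is not justified there; a direct justification needs the second moment to be finite, which is exactly what is being computed. The paper also never addresses a possible atom of $-w_\gamma(t,\cdot)$ at $\rho=0$, which would give $Q_{\gamma,\alpha}(t,\cdot)$ a $\delta_0$ part and hence no density. Your version handles both. All interchanges involve nonnegative integrands, finiteness of the msd comes out as a conclusion, and the atom is ruled out by the upper bound in Proposition~\ref{prop 2.1}(iii).

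Your last step can be simplified. By \eqref{2.2}, $0\le(1\ast m_\gamma)(t)-\int_0^t r_\gamma(\tau,\nu)\,\mathrm{d}\tau=\nu\,(1\ast m_\gamma\ast r_\gamma(\cdot,\nu))(t)\le\nu\,(1\ast m_\gamma\ast m_\gamma)(t)$. So neither pointwise convergence of $r_\gamma(\tau,\nu)$ nor dominated convergence is needed.

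Your dimensional remark is right and should be kept. Since $\widehat{\tilde Q}_{\gamma,2}(\lambda,\xi)$ depends only on $|\xi|^2$, one gets $-\Delta_\xi\widehat{\tilde Q}_{\gamma,2}(\lambda,\xi)|_{\xi=0}=2N(1+\gamma\lambda\widehat m(\lambda))/\lambda^2$. The paper's computation therefore loses a factor $N$, and in fact $\int_{\mathbb R^N}|x|^2Q_{\gamma,2}(t,x)\,\mathrm{d}x=2N\left(t+\gamma(1\ast m)(t)\right)$. The formula in the statement holds for $N=1$, or coordinatewise as $\int x_j^2Q_{\gamma,2}(t,x)\,\mathrm{d}x$, which is how you propose to state it.
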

	\begin{proof}
		Since $H_\alpha(t,x)$ is positive on $(0, \infty) \times \mathbb R^N$ and $-w_\gamma(t, \mathrm{d}\tau)$ is a positive measure, it follows from \eqref{2.6} that $Q_{\gamma,\alpha}(t,x)\geq 0$  for $t>0$. Using \eqref{2.8} and Fubini theorem, we know
		\begin{align*}
			\int_{0}^{\infty} e^{-\lambda t} \int_{\mathbb R^N}Q_{\gamma,\alpha}(t, x)\, \mathrm{d}x \mathrm{d}t
			&= \int_{0}^{\infty} \int_{\mathbb R^N} e^{-\lambda t} Q_{\gamma,\alpha}(t, x)\, \mathrm{d}x \mathrm{d}t\\
			&= \int_{0}^{\infty} \int_{\mathbb R^N} \left. \left(e^{-ix \cdot \xi}\right)\right|_{\xi=0}  e^{-\lambda t} Q_{\gamma,\alpha}(t, x)\, \mathrm{d}x \mathrm{d}t\\
			&=\left. \left( \int_{0}^{\infty} \int_{\mathbb R^N} e^{-ix \cdot \xi}  e^{-\lambda t} Q_{\gamma,\alpha}(t, x)\, \mathrm{d}x \mathrm{d}t \right)\right|_{\xi=0} \\
			&=\widehat {\tilde Q}_{\gamma,\alpha}(\lambda, 0)=\frac{1}{\lambda}.
		\end{align*}
		The inverse Laplace transform yields
		$$
		\int_{\mathbb R^N}Q_{\gamma,\alpha}(t, x)\, \mathrm{d}x=1, \quad t>0,	
		$$
		it also proves $Q_{\gamma,\alpha}(t,x)$ is the probability density function $Y(t)$.
		
		For the Laplace transform of $msd_{\gamma,2}$ we have
		\begin{align*}
			\widehat{msd}_{\gamma,2}(\lambda)=
			&=\int_{0}^{\infty} e^{-\lambda t} \int_{\mathbb R^N}|x|^2Q_{\gamma,2}(t, x)\, \mathrm{d}x \mathrm{d}t
			= \int_{0}^{\infty} \int_{\mathbb R^N} e^{-\lambda t} |x|^2Q_{\gamma,2}(t, x)\, \mathrm{d}x \mathrm{d}t\\
			&= \int_{0}^{\infty} \int_{\mathbb R^N} \left. \left(-\Delta_\xi e^{-ix \cdot \xi}\right)\right|_{\xi=0}  e^{-\lambda t} Q_{\gamma,2}(t, x)\, \mathrm{d}x \mathrm{d}t\\
			&=\left. \left(-\Delta_\xi  \int_{0}^{\infty} \int_{\mathbb R^N} e^{-ix \cdot \xi}  e^{-\lambda t} Q_{\gamma,2}(t, x)\, \mathrm{d}x \mathrm{d}t \right)\right|_{\xi=0} \\
			&=-\Delta_\xi\left. \left(\widehat {\tilde Q}_{\gamma,2}(\lambda, \xi)\right) \right|_{\xi=0}=\frac{2 \left(1+\gamma \lambda \widehat m(\lambda)\right)}{\lambda^2}.
		\end{align*}
		Using the inverse Laplace transform again, the assertion follows.
	\end{proof}

	\section{Auxiliary results}
	In this section, we will give a number of statements characterizing smoothing properties of relaxation functions and solution operators.	
	\subsection{Properties of relaxation functions}	
	Now we state several in-depth properties of $s_\gamma(t,\nu)$ and $r_\gamma(t,\nu)$. First, we give a meticulous proof of the continuity of $r_\gamma(t,\nu)$ with respect to $t$.
	\begin{lemma}\label{newprop 2.2}
		Assume $m\in(\mathcal{H})$ and $\nu \geq 0$. Then the function $r_\gamma(\cdot,\nu)$ is continuous on $(0,\infty)$ for $\beta_\gamma>0$. If $m$ is nonincreasing and continuous, then  $r_\gamma(\cdot,\nu)$ is continuous on $(0,\infty)$ for $\beta_\gamma =0$.
	\end{lemma}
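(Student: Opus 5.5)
The plan is to derive a Volterra equation for $r_\gamma(\cdot,\nu)$ whose kernel and forcing term are better behaved than $m_\gamma$. Then continuity of $r_\gamma$ on $(0,\infty)$ is inherited from the data, as in Proposition \ref{prop 2.1}(i).

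\emph{The equation for $r_\gamma$.} From the proof of Proposition \ref{prop 2.1}(iii) we have the identity
\[
s_\gamma(t,\nu)=\beta_\gamma r_\gamma(t,\nu)+(k_\gamma\ast r_\gamma(\cdot,\nu))(t),\qquad t\ge 0 .
\]
By Proposition \ref{prop 2.1}(ii) the left-hand side is continuous on $[0,\infty)$. We also have $s_\gamma(t,\nu)=1-\nu(1\ast r_\gamma)(t)$. Substituting this gives
\begin{align*}
\beta_\gamma r_\gamma(t,\nu)+\big((k_\gamma+\nu)\ast r_\gamma(\cdot,\nu)\big)(t)=1 .
\end{align*}

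\emph{Case $\beta_\gamma>0$.} Divide by $\beta_\gamma$ to get a Volterra equation of the second kind:
\[
r_\gamma+\beta_\gamma^{-1}(k_\gamma+\nu)\ast r_\gamma=\beta_\gamma^{-1}.
\]
The kernel $\beta_\gamma^{-1}(k_\gamma+\nu)$ lies in $L^1_{loc}(\mathbb R_+)$ and the forcing term is a constant. I will argue as in \cite[Chapter 2, Theorem 3.5]{G. Gripenberg}: let $\rho$ be the resolvent kernel in $L^1_{loc}$, so that $r_\gamma=\beta_\gamma^{-1}(1-\rho\ast 1)$. Since $\rho\in L^1_{loc}$, the function $1\ast\rho$ is absolutely continuous, so $r_\gamma$ is even continuous on $[0,\infty)$.

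A more elementary route is also available. Since $r_\gamma\in L^1_{loc}$, the convolution $(k_\gamma+\nu)\ast r_\gamma$ is continuous, being the convolution of an $L^1_{loc}$ function with an $L^\infty_{loc}$ function. This requires first checking that $r_\gamma$ is locally bounded. For that, use Proposition \ref{prop 2.1}(iii) together with the identity above: $\beta_\gamma r_\gamma\le s_\gamma\le 1$, since both $k_\gamma\ast r_\gamma$ and $r_\gamma$ are nonnegative. Then $r_\gamma=\beta_\gamma^{-1}\big(1-(k_\gamma+\nu)\ast r_\gamma\big)$ is continuous, since an $L^1_{loc}$ kernel convolved with an $L^\infty_{loc}$ function is continuous. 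This route is cleaner and I will use it.

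\emph{Case $\beta_\gamma=0$.} This is the main obstacle, because there is no smoothing term. I will work directly with \eqref{2.2}, $r_\gamma=m_\gamma-\nu\, m_\gamma\ast r_\gamma$, where $m_\gamma=1+\gamma m$ is now continuous on $(0,\infty)$ and nonincreasing. By Proposition \ref{prop 2.1}(iii), $0\le r_\gamma\le m_\gamma$, so $r_\gamma$ is bounded on every $[\delta,T]$, though it may be singular at $0$. It therefore suffices to show that $t\mapsto (m_\gamma\ast r_\gamma)(t)$ is continuous on $(0,\infty)$.

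To do so, fix $t_0>0$ and $0<\delta<t_0/2$. Split
\[
(m_\gamma\ast r_\gamma)(t)=\int_0^{\delta}m_\gamma(t-\tau)r_\gamma(\tau)\,d\tau+\int_\delta^{t}m_\gamma(t-\tau)r_\gamma(\tau)\,d\tau .
\]
In the first integral, $m_\gamma(t-\tau)$ is continuous and bounded for $t$ near $t_0$, since $t-\tau\ge t_0/2$. Also $r_\gamma\in L^1(0,\delta)$, so dominated convergence gives continuity at $t_0$. In the second integral, $r_\gamma$ is bounded on $[\delta,T]$ and $m_\gamma\in L^1_{loc}$, so continuity follows from continuity of translation in $L^1$.

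Hence $r_\gamma=m_\gamma-\nu\, m_\gamma\ast r_\gamma$ is continuous on $(0,\infty)$. The continuity of $m$ enters exactly through the term $m_\gamma$ itself and through the near-zero piece, while monotonicity supplies the bound $r_\gamma\le m_\gamma$.
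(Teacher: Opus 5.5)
Your proof is correct. It takes a genuinely different route from the paper in the case $\beta_\gamma>0$, and a tidier version of the same idea in the case $\beta_\gamma=0$.

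\textbf{Case $\beta_\gamma>0$.} The paper first rewrites $(\mathcal H)$ as the second-kind equation $m_\gamma+\beta_\gamma^{-1}k_\gamma\ast m_\gamma=\beta_\gamma^{-1}$. From this it deduces that $m_\gamma$ itself is continuous on $[0,\infty)$ and bounded, citing an external lemma of Levin for boundedness. It then uses a convolution result from Gripenberg et al.\ to get continuity of $m_\gamma\ast r_\gamma$, and concludes via \eqref{2.2}. You instead combine $s_\gamma=\beta_\gamma r_\gamma+k_\gamma\ast r_\gamma$ with $s_\gamma=1-\nu(1\ast r_\gamma)$ to obtain
\[
\beta_\gamma r_\gamma+(k_\gamma+\nu)\ast r_\gamma=1 .
\]
In other words, $r_\gamma(\cdot,\nu)$ satisfies a relation of exactly the type in $(\mathcal H)$, with $k_\gamma$ replaced by $k_\gamma+\nu$. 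From this you read off both continuity and the explicit bound $r_\gamma\le\beta_\gamma^{-1}$.

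Your argument is self-contained: it uses only the identity already proved inside Proposition \ref{prop 2.1}(iii), with no external boundedness lemma. The paper's detour has one advantage: as a by-product it shows that $m_\gamma$ is continuous and bounded on $[0,\infty)$. The paper relies on exactly this in Remark \ref{rem 4.1} to drop the continuity assumption on $m$ when $\beta_\gamma>0$, and your route does not provide it.

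\textbf{Case $\beta_\gamma=0$.} Both arguments prove continuity of $m_\gamma\ast r_\gamma$ on $(0,\infty)$ and conclude from \eqref{2.2}. The paper handles right and left limits separately through five integrals, using monotonicity of $m$ to build the dominating functions. Your fixed split at $\delta$ is shorter. Near $\tau=0$ you use dominated convergence with $r_\gamma\in L^1(0,\delta)$. Away from $0$ you use continuity of translations in $L^1$ against the bounded function $r_\gamma\mathbf{1}_{[\delta,T]}$.

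Your version never actually uses monotonicity. The bound $0\le r_\gamma\le m_\gamma$ comes from positivity alone, since $m_\gamma\ast r_\gamma\ge 0$. Local boundedness of $m_\gamma$ on compact subsets of $(0,\infty)$ already follows from continuity. So your closing remark that monotonicity ``supplies the bound $r_\gamma\le m_\gamma$'' misattributes its role. This is harmless, and it means your argument proves the $\beta_\gamma=0$ statement under continuity of $m$ on $(0,\infty)$ alone.
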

	\begin{proof}
		Let $\beta_\gamma>0$.  From \eqref{1.2} we have
		$$m_{\gamma}(t) + \frac{1}{\beta_\gamma}(k_{\gamma}\ast m_{\gamma})(t) = \frac{1}{\beta_\gamma}\quad {\rm for}~~t>0.$$
		It follows from Proposition \ref{prop 2.1} (i) that
		$m_\gamma(t) = \frac{1}{\beta_\gamma}[1 - (1 \ast a_\gamma)(t)]$ for $t\geq0$,
		where $a_\gamma \in L^1_{loc}(\mathbb R_+)$ is a unique solution of the following integral equation
		\begin{align*}
			a_\gamma(t)+\frac{1}{\beta_\gamma}(k_\gamma \ast a_\gamma)(t) =
			\frac{ k_\gamma(t)}{\beta_\gamma} \quad {\rm for}~~t>0.
		\end{align*}
It ensures the continuity of $m_\gamma(t)$ on $\mathbb R_+$. Moreover, $m_\gamma$ is also non-negative and bounded (due to \cite[Lemma 1.3]{J. J. Levin}). Using \cite[Chapter 3, Corollary 6.2]{G. Gripenberg} we can see that $(m \ast r_\gamma(t,\nu))(t)$ is continuous on $\mathbb R_+$. Then Proposition \ref{prop 2.1} (i) shows that $r_\gamma(\cdot,\nu)$ is also continuous on $\mathbb R_+$.
		
		Let $\beta_\gamma =0$. Fix $t \in (0,\infty)$, we choose $h \in (0,\frac{t}{2})$, then we have
		\begin{align*}
			&\quad\left|(m_\gamma \ast r_\gamma(\cdot,\nu))(t+h) - (m_\gamma \ast r_\gamma(\cdot, \nu)(t) \right|\\
			&\leq \int_0^t |m_\gamma(t+h-\tau) - m_\gamma(t-\tau)|r_\gamma(\tau,\nu)\,  \mathrm{d}\tau
			+ \int_t^{t+h} m_\gamma(t+h-\tau) r_\gamma(\tau,\nu)\, \mathrm{d}\tau\\
			&:= I_1(t,h)+I_2(t,h).
		\end{align*}
		Since $m$ is nonincreasing and continuous on $(0,\infty)$, it follows that
		\begin{align*}
			|m_\gamma(t+h-\tau) - m_\gamma(t-\tau)|r_\gamma(\tau,\nu) \leq 2 m(t-\tau)r_\gamma(\tau, \nu),
		\end{align*}
		and $\lim_{h \to 0^+} m_\gamma(t+h-\tau) = m_\gamma(t-\tau)$ for $\tau \in (0,t)$. Then by the Lebesgue's dominated convergence theorem we have $\lim_{h \to 0^+} I_1(t,h) \to 0$. For $I_2(t,h)$, using Proposition \ref{2.1} (iii), one can see that
		\begin{align*}
			0 \leq I_2(t,h) \leq \int_t^{t+h} m_\gamma(t+h-\tau)m_\gamma(\tau)\, \mathrm{d}\tau
			\leq \sup_{\zeta \in [t, \frac{3}{2}t]} m_\gamma(\zeta) \int_0^{h} m_\gamma(\tau)\, \mathrm{d}\tau \to 0 \text{ ~as~ } h \to 0^+.
		\end{align*}
		This ensures the right-continuity of $(m \ast r_\gamma(\cdot, \nu))(t)$.
		
		Now we prove the left-continuity of $(m \ast r_\gamma(\cdot, \nu))(t)$. For any $\epsilon \in (0, \frac{t}{2})$, we have
		\begin{align*}
			&\quad\left|(m \ast r_\gamma(\cdot,\nu))(t-h) - (m_\gamma \ast r_\gamma(\cdot, \nu)(t) \right|\\
			&\leq \int_{\epsilon}^{t-h} \left(m_\gamma(t-h-\tau) - m_\gamma(t-\tau)\right)m_\gamma(\tau)\,  \mathrm{d}\tau\\
			&\quad +\int_0^{\epsilon} \left(m_\gamma(t-h-\tau) - m_\gamma(t-\tau)\right)m_\gamma(\tau)\,  \mathrm{d}\tau
			+ \int_{t-h}^t m_\gamma(t-h-\tau) m_\gamma(\tau)\, \mathrm{d}\tau\\
			&:= I_{3}(t,h) + I_{4}(t,h)+I_5(t,h).
		\end{align*}
		From the nonincreasity of $m$ and Proposition \ref{prop 2.1} (iv), it is trivial to check that
		\begin{align*}
			&0\leq I_{3}(t,h) = m_\gamma(\epsilon)\left[(1\ast m_\gamma)(t-h-\epsilon) - (1\ast m_\gamma)(t-\epsilon) +(1\ast m_\gamma)(h) \right] \to 0 \text{ ~as~ } h \to 0^+,\\
			&0 \leq I_5(t,h) \leq \sup_{\zeta \in [\frac{t}{2}, t]} m_\gamma(\zeta) \int_0^{h} m_\gamma(\tau)\, \mathrm{d}\tau \to 0 \text{ ~as~ } h \to 0^+.
		\end{align*}
		Applying the nonincreasity and continuity of $m$ on $(0,\infty)$, we get
		\begin{align*}
			\left(m_\gamma(t-h-\tau) - m_\gamma(t-\tau)\right)m_\gamma(\tau) \leq 2 m\left(\frac{t}{2}-\epsilon\right)m_\gamma(\tau),
		\end{align*}
		and $\lim_{h \to 0^+} m_\gamma(t-h-\tau) = m_\gamma(t-\tau)$ for $\tau \in (0,\epsilon)$. Then it yields $\lim_{h \to 0^+} I_{4}(t,h) \to 0$ by Lebesgue's dominated convergence theorem again. It ensures the left-continuity of $(m \ast r_\gamma(\cdot, \nu))(t)$. This together with \eqref{2.2} shows that $r_\gamma(\cdot,\nu)$ is continuous on $(0,\infty)$.
	\end{proof}
	
	In the following lemmas, we give a different proof of the differentiability of $\nu \mapsto s_\gamma(t,\nu)$ and $\nu \mapsto s_\gamma(t,\nu)$ from \cite[Lemma 5.1]{J. Kemppainen} and \cite[Lemma 5.3]{J.C. Pozo}, in which higher smoothness of the two functions were showed, too.
	\begin{lemma}\label{lemma 3.5}
		Let $m \in (\mathcal {H})$ be satisfied and $\nu \geq 0$. Then
		\begin{itemize}
			\item[(i)] for arbitrary $t \in \mathbb R_+$ fixed, $\nu \mapsto s_\gamma(t,\nu)$ is differentiable. Moreover,
			\begin{align*}
				\partial_\nu s_\gamma(t,\nu) = -[s_\gamma(\cdot, \nu) \ast r_\gamma(\cdot, \nu)](t).
			\end{align*}
			
			\item[(ii)] for arbitrary $t \in (0,\infty)$ fixed, $\nu \mapsto r_\gamma(t,\nu)$ is differentiable. Moreover,
			\begin{align*}
				\partial_\nu r_\gamma(t,\nu) = -[r_\gamma(\cdot, \nu) \ast r_\gamma(\cdot, \nu)](t).
			\end{align*}
		\end{itemize}
		
		\begin{proof}
			(i) For arbitrary $t \geq 0$ and $\nu_0 \geq 0$ fixed, let $\delta>0$. Then \eqref{2.1} yields
			\begin{align}\label{3.2}
				\begin{aligned}
					&\quad s_\gamma(t,\nu_0+\delta) - s_\gamma(t,\nu_0) + \nu_0 [m_{\gamma}\ast(s_\gamma(\cdot, \nu_0+\delta) - s_\gamma(\cdot, \nu_0))](t) \\
					&= -\delta [m_{\gamma}\ast s_\gamma(\cdot,\nu_0+\delta)](t).
				\end{aligned}
			\end{align}
			This combined with Proposition \ref{prop 2.1} (i) shows that
			\begin{align*}
				\begin{aligned}
					s_\gamma(t,\nu_0+\delta) - s_\gamma(t,\nu_0)= \delta \Big(\nu_0[m_{\gamma}\ast s_\gamma(\cdot,\nu_0+\delta)\ast r_\gamma(\cdot,\nu_0)](t)- [m_{\gamma}\ast s_\gamma(\cdot,\nu_0+\delta)](t)\Big).
				\end{aligned}
			\end{align*}
			Since $|s_\gamma(t,\nu_0+\delta)|\leq1$ (due to Proposition \ref{prop 2.1} (iii)), we have
			\begin{align}\label{3.3}
				|s_\gamma(t,\nu_0+\delta) - s_\gamma(t,\nu_0)| \leq \delta \left[\nu_0(1 \ast m_{\gamma}\ast m_{\gamma})(t)+ (1 \ast m_{\gamma})(t)\right],
			\end{align}
			which implies that $\nu \mapsto s_\gamma(t,\nu)$ is right-continuous on $\mathbb R_+$. The left-continuity follows similarly, therefore $\nu \mapsto s_\gamma(t,\nu)$ is continuous on $\mathbb R_+$.
			
			Now we are ready to prove the differentiability of $\nu \mapsto s_\gamma(t,\nu)$. Let $s^{(1)}_\gamma(t,\nu_0)$ be the solution to the following Volterra integral equation
			\begin{align}\label{3.4}
				\begin{aligned}
					s^{(1)}_\gamma(t,\nu_0) + \nu_0 m_{\gamma}\ast s^{(1)}_\gamma(\cdot, \nu_0)(t)= - m_{\gamma}\ast s_\gamma(\cdot,\nu_0)(t).
				\end{aligned}
			\end{align}
			We denote
			$$b_\gamma(t,\nu_0,\delta) = \frac{s_\gamma(t,\nu_0+\delta) - s_\gamma(t,\nu_0)}{\delta}-s^{(1)}_\gamma(t,\nu_0).$$
			Using the equality \eqref{3.2}, we conclude that $b_\gamma(t,\nu_0)$ solves the equation
			\begin{align*}
				b_\gamma(t,\nu_0,\delta) + \nu_0 m_{\gamma}\ast b_\gamma(\cdot,\nu_0,\delta) = -m_{\gamma} \ast \left(s_\gamma(\cdot, \nu_0 + \delta) -s_\gamma(\cdot, \nu_0) \right)(t),
			\end{align*}
			it follows from Proposition \ref{prop 2.1} (i) again that
			\begin{align*}
				b_\gamma(t,\nu_0,\delta) &=\nu_0 [m_{\gamma} \ast \left(s_\gamma(\cdot, \nu_0 + \delta) -s_\gamma(\cdot, \nu_0) \right) \ast r_\gamma(\cdot,\nu_0)](t)\\
				&\quad -[m_{\gamma} \ast \left(s_\gamma(\cdot, \nu_0 + \delta) -s_\gamma(\cdot, \nu_0) \right)](t).
			\end{align*}
			Thanks to \eqref{3.3}, we obtain the inequality
			\begin{align*}
				|b_{\gamma}(t, \nu_0,\delta)| &\leq \delta \left[\nu_0^2(1 \ast m_{\gamma}\ast m_{\gamma}\ast m_{\gamma}\ast m_{\gamma})(t)+2\nu_0(1 \ast m_{\gamma}\ast m_{\gamma}\ast m_{\gamma})(t) \right.\\  &\left.\quad + (1 \ast m_{\gamma}\ast m_{\gamma})(t) \right].
			\end{align*}
			Therefore,
			\begin{align*}
				\lim_{\delta \to 0^+}\frac{s_\gamma(t,\nu_0+\delta) - s_\gamma(t,\nu_0)}{\delta}=s^{(1)}_\gamma(t,\nu_0).
			\end{align*}
			Similarly, $\partial_{\nu^{-} }s_\gamma(t,\nu)|_{\nu=\nu_0} = s^{(1)}_\gamma(t,\nu_0)$. Then $\nu \mapsto s_\gamma(t,\nu)$ is differentiable.
			Moreover, Proposition \ref{2.1}(i),  \eqref{2.2} and \eqref{3.4} allows to conclude that
			\begin{align*}
				s^{(1)}_\gamma(t,\nu_0) = -[s_\gamma(\cdot,\nu_0)\ast \left(m_{\gamma}- m_{\gamma}\ast r_\gamma(\cdot,\nu_0)\right)](t) = -[s_\gamma(\cdot,\nu_0)\ast r_\gamma(\cdot,\nu_0)](t).
			\end{align*}
			
			(ii) By the similar arguments, (ii) holds, too.
			The proof is completed.
		\end{proof}
	\end{lemma}

	\begin{lemma}\label{lemma 3.7}
		Let $m \in (\mathcal{H})$ be satisfied. Then for $t \in \mathbb R_+$ and $\varepsilon \in [0,1]$, we have
		\begin{align*}
				\left|\partial_\alpha s_\gamma(t,|\xi|^\alpha) \right| \leq  \left|\ln |\xi|\right| \quad {\it for}~ \xi \in \mathbb R^N \setminus\{0\}.
		\end{align*}
		Moreover, it holds
		\begin{align*}
			\left|\partial_\alpha s_\gamma(t,|\xi|^\alpha) \right| \geq \frac{1}{4} \frac{1}{1+[k_\gamma(t)]^{-1}}\frac{(1\ast m_\gamma)(t)}{1+(1\ast m_\gamma)(t)} \left| \ln |\xi| \right| \quad {\it for}~|\xi| \in [\frac{1}{2}, 1].
		\end{align*}
		\begin{proof}
			Let $|\xi| \neq 0$. We can easily get from Lemma \ref{lemma 3.5} (i) that
			\begin{align}\label{3.5}
				\partial_\alpha s_\gamma(t,|\xi|^\alpha)= - (s_\gamma(\cdot, |\xi|^\alpha) \ast r_\gamma(\cdot, |\xi|^\alpha))(t)|\xi|^\alpha \ln|\xi|.
			\end{align}
			Using Proposition \ref{prop 2.1} (ii) and (iii), we obtain
	\begin{align*}
					\left|\partial_\alpha s_\gamma(t,|\xi|^\alpha) \right|
					&\leq |\xi|^\alpha \left(1 \ast r_\gamma(\cdot, |\xi|^\alpha)\right)(t)\left|\ln |\xi|\right|
					=\left(1-s_\gamma(t,|\xi|^\alpha) \right)\left|\ln |\xi|\right| \leq \left|\ln |\xi|\right|.
			\end{align*}
			Then the first inequality follows.
			
			On the other hand, applying the nonincreasity of $k_\gamma$, \eqref{3.5}, Proposition \ref{prop 2.1} (ii) and (iii) again, we conclude that for $|\xi| \in [\frac{1}{2},1]$,
			\begin{align*}
				\left|\partial_\alpha s_\gamma(t,|\xi|^\alpha) \right|
				&\geq |\xi|^{\alpha} \left| \ln |\xi| \right| \int_0^t  \frac{1+\beta_\gamma |\xi|^\alpha[k_\gamma(\tau)]^{-1}r_\gamma(t,|\xi|^\alpha)}{1+|\xi|^\alpha [k_\gamma(\tau)]^{-1}} r_\gamma(t-\tau,|\xi|^\alpha) \, \mathrm{d}\tau\\
				&\geq |\xi|^{\alpha}\left| \ln |\xi| \right| \int_0^t  \frac{1}{1+|\xi|^\alpha [k_\gamma(\tau)]^{-1}} r_\gamma(t-\tau,|\xi|^\alpha) \, \mathrm{d}\tau\\
				&\geq \frac{1}{1+[k_\gamma(t)]^{-1}}\left| \ln |\xi| \right| |\xi|^{\alpha}\int_0^t r_\gamma(\tau,|\xi|^\alpha)\, \mathrm{d}\tau\\
				&\geq \frac{1}{1+[k_\gamma(t)]^{-1}}\frac{|\xi|^\alpha(1\ast m_\gamma)(t)}{1+|\xi|^\alpha(1\ast m_\gamma)(t)} \left| \ln |\xi| \right|\\
				&\geq \frac{1}{4} \frac{1}{1+[k_\gamma(t)]^{-1}}\frac{(1\ast m_\gamma)(t)}{1+(1\ast m_\gamma)(t)} \left| \ln |\xi| \right|.
			\end{align*}
			The proof is completed.
		\end{proof}
	\end{lemma}
	
	\begin{lemma}\label{lemma 3.8}
		Let $m$ satisfy $(\mathcal{H})$ and be nonincreasing. If $\sigma \in [0,\alpha)$ for $\alpha \in (1,2]$, then
		\begin{align*}
			|\xi|^{\sigma}\left|\partial_\alpha r_\gamma(t,|\xi|^\alpha) \right| \leq \frac{4\alpha}{\alpha-\sigma} m_\gamma \left(\frac{t}{2}\right) \left[(1\ast m_\gamma)\left(\frac{t}{2} \right)\right]^{-\frac{\sigma}{\alpha}} \left|\ln |\xi|\right|
		\end{align*}
		for $\xi \in \mathbb R^N \setminus\{0\}$ and $t \in (0,\infty)$.
		\begin{proof}
			For $|\xi| \neq 0$ and $\alpha \in (1,2]$, using Lemma \ref{lemma 3.5} (ii) we get
			\begin{align*}
				\partial_\alpha r_\gamma(t,|\xi|^\alpha)= - (r_\gamma(\cdot, |\xi|^\alpha) \ast r_\gamma(\cdot, |\xi|^\alpha))(t)|\xi|^\alpha \ln|\xi|.
			\end{align*}
			It follows from Proposition \ref{prop 2.1} (iii) that
			\begin{align*}
				|\xi|^{\sigma}|\partial_\alpha r_\gamma(t,|\xi|^\alpha)| &\leq  \left| \ln|\xi|\right| \int_0^t \frac{m_\gamma(t-\tau)|\xi|^{\frac{\alpha+\sigma}{2}}}{1+|\xi|^\alpha(1\ast m_\gamma)(t-\tau)} \frac{m_\gamma(\tau)|\xi|^{\frac{\alpha+\sigma}{2}}}{1+|\xi|^\alpha(1\ast m_\gamma)(\tau)}\, \mathrm{d}\tau\\
				&\leq \left| \ln|\xi|\right| \int_0^t m_\gamma(t-\tau)\left[(1\ast m_\gamma)(t-\tau)\right]^{-\frac{\alpha+\sigma}{2\alpha}} m_\gamma(\tau)\left[(1\ast m_\gamma)(\tau)\right]^{-\frac{\alpha+\sigma}{2\alpha}}\, \mathrm{d}\tau.
			\end{align*}
			It suffices to show
			\begin{align*}
				&\quad \int_0^t m_{\gamma}(t-\tau)[(1\ast m_{\gamma})(t-\tau)]^{-\frac{\alpha+\sigma}{2\alpha}}m_{\gamma}(\tau)[(1\ast m_{\gamma})(\tau)]^{-\frac{\alpha+\sigma}{2\alpha}} \, \mathrm{d}\tau\\
				&\leq m_\gamma\left(\frac{t}{2}\right)\left[(1\ast m_{\gamma})\left(\frac{t}{2}\right)\right]^{-\frac{\alpha+\sigma}{2\alpha}}\int_0^{\frac{t}{2}} [(1\ast m_{\gamma})(\tau)]^{-\frac{\alpha+\sigma}{2\alpha}}m_{\gamma}(\tau) \, \mathrm{d}\tau \\
				&\quad +m_\gamma \left(\frac{t}{2}\right)\left[(1\ast m_{\gamma})\left(\frac{t}{2}\right)\right]^{-\frac{\alpha+\sigma}{2\alpha}}\int_{\frac{t}{2}}^t [(1\ast m_{\gamma})(t-\tau)]^{-\frac{\alpha+\sigma}{2\alpha}}m_{\gamma}(t-\tau) \, \mathrm{d}\tau\\
				&\leq \frac{4\alpha}{\alpha-\sigma} m_\gamma \left(\frac{t}{2}\right) \left[(1\ast m_\gamma)\left(\frac{t}{2} \right)\right]^{-\frac{\sigma}{\alpha}}.
			\end{align*}
			The proof is completed.
		\end{proof}
	\end{lemma}

	\subsection{Estimates and continuity of fundamental solutions}
	%

In what follows, the proofs of  Sobolev estimates of $Q_{\gamma,\alpha}$ and $G_{\gamma,\alpha}$ are presented.
\begin{lemma}\label{lemma 3.1}
Let $m$ satisfy $(\mathcal{H})$ and $\alpha \in (1,2]$. Then for $s_1, s_2 \geq 0$ with $0\leq \frac{s_2-s_1}{\alpha} \leq 1$ and for $t>0$, we have
\begin{itemize}
	\item[{\rm(i)}] $\|Q_{\gamma,\alpha}(t, \cdot)\star u\|_{H^{s_2}} \leq 2^{\frac{s_2-s_1}{2}} \left(1+[t+\gamma(1\ast m)(t)]^{-\frac{s_2-s_1}{\alpha}}\right) \|u\|_{H^{s_1}}$.
	\item[{\rm(ii)}] $\|Q_{\gamma,\alpha}(t, \cdot)\star u\|_{\dot{H}^{s_2}} \leq [t+\gamma(1\ast m)(t)]^{-\frac{s_2-s_1}{\alpha}}\|u\|_{\dot{H}^{s_1}}$.
\end{itemize}
\begin{proof}
	(i)	Recall that $m_{\gamma}(t)=1+\gamma m(t)$. Since 	
	\begin{align*}
		\|Q_{\gamma,\alpha}(t, \cdot)\star u\|^2_{H^{s_2}}
		&=\int_{\mathbb R^N} s^2_\gamma(t,|\xi|^\alpha)|\tilde{u}(\xi)|^2(1+|\xi|^2)^{s_2}\, \mathrm{d}\xi\\
		&\leq \int_{\mathbb R^N} \frac{(1+|\xi|^2)^{s_2}}{\left(1+|\xi|^{\alpha}(1\ast m_{\gamma})(t)\right)^2}|\tilde{u}(\xi)|^2\, \mathrm{d}\xi\\
		&\leq \int_{\mathbb R^N} \frac{(1+|\xi|^2)^{s_2-s_1}}{\left(1+|\xi|^{\alpha}(1\ast m_{\gamma})(t)\right)^2}|\tilde{u}(\xi)|^2(1+|\xi|^2)^{s_1}\, \mathrm{d}\xi,
	\end{align*}
	where we have used Proposition \ref{prop 2.1} (iii) and \eqref{2.5}, using
	the fact that $0 \leq \frac{s_2-s_1}{\alpha} \leq 1$, we obtain
	\begin{align}\label{3.1}
		\begin{aligned}
			&\quad \frac{(1+|\xi|^2)^{\frac{s_2-s_1}{2}}}{1+|\xi|^{\alpha}(1\ast m_{\gamma})(t)}\\
			&\leq 2^{\frac{s_2-s_1}{2}} \frac{1+|\xi|^{s_2-s_1}}{1+|\xi|^{\alpha}(1\ast m_{\gamma})(t)}\\
			&\leq 2^{\frac{s_2-s_1}{2}}\left(1+ (1\ast m_{\gamma})(t)^{-\frac{s_2-s_1}{\alpha}} \frac{\left[|\xi|^{\alpha}(1\ast m_{\gamma})(t)\right]^{\frac{s_2-s_1}{\alpha}}}{1+|\xi|^{\alpha}(1\ast m_{\gamma})(t)} \right)\\
			&\leq 2^{\frac{s_2-s_1}{2}}\left(1+ (1\ast m_{\gamma})(t)^{-\frac{s_2-s_1}{\alpha}} \right).
		\end{aligned}
	\end{align}
	It shows that (i) holds.
	The inequality in (ii) follows from the similar arguments.
\end{proof}
\end{lemma}
\begin{lemma}\label{lemma 3.2}
Let $m$ satisfy $(\mathcal{H})$ and be nonincreasing. Then for $s_1, s_2 \geq 0$ with $0\leq \frac{s_2-s_1}{\alpha} \leq 1$ and for $t>0$, we have	
\begin{itemize}
	\item[{\rm(i)}] $\|G_{\gamma,\alpha}(t, \cdot)\star u\|_{H^{s_2}} \leq 2^{\frac{s_2-s_1}{2}}(1+\gamma m(t))\left(1+[t+\gamma(1\ast m)(t)]^{-\frac{s_2-s_1}{\alpha}}\right) \|u\|_{H^{s_1}}$.
	\item[{\rm(ii)}] $\|G_{\gamma,\alpha}(t, \cdot)\star u\|_{\dot{H}^{s_2}} \leq (1+\gamma m(t))[t+\gamma(1\ast m)(t)]^{-\frac{s_2-s_1}{\alpha}}\|u\|_{\dot{H}^{s_1}}$.
\end{itemize}
\begin{proof}
	We apply Proposition \ref{prop 2.1} (iii) and \eqref{2.7} to obtain that
	\begin{align*}
		\|G_{\gamma,\alpha}(t, \cdot)\star u\|^2_{H^{s_2}}
		&=\int_{\mathbb R^N} r^2_\gamma(t,|\xi|^\alpha)|\tilde{u}(\xi)|^2(1+|\xi|^2)^{s_2}\, \mathrm{d}\xi\\
		&\leq \int_{\mathbb R^N} \frac{m_{\gamma}(t)^2(1+|\xi|^2)^{s_2}}{\left(1+|\xi|^{\alpha}(1\ast m_{\gamma})(t)\right)^2}|\tilde{u}(\xi)|^2\, \mathrm{d}\xi\\
		&\leq \int_{\mathbb R^N} \frac{m_{\gamma}(t)^2(1+|\xi|^2)^{s_2-s_1}}{\left(1+|\xi|^{\alpha}(1\ast m_{\gamma})(t)\right)^2}|\tilde{u}(\xi)|^2(1+|\xi|^2)^{s_1}\, \mathrm{d}\xi.
	\end{align*}
	This combined with $0 \leq \frac{s_2-s_1}{\alpha} \leq 1$ and \eqref{3.1} shows the inequality in (i) holds.
	The inequality in (ii) follows from the similar arguments.
\end{proof}
\end{lemma}
\begin{remark}\label{lemma 3.4}
	For every $s\geq 0$ and $u\in H^s$, we can write $\|u\|_{H^s} = \|(I-\Delta)^{\frac{s}{2}}u\|_{L^2}$ and$\|u\|_{{\dot{H}}^s}=\|(-\Delta)^{\frac{s}{2}}u\|_{L^2}$. Assuming the condition in Lemma \ref{lemma 3.1} holds, we have the following inference:
	\begin{itemize}
		\item[{\rm(i)}] $\left\|(-\Delta)^{\frac{\alpha}{2}}Q_{\gamma,\alpha}(t,\cdot)\star u \right\|_{H^{s_1}} \leq [(1\ast m_\gamma)(t)]^{\frac{s_2-s_1}{\alpha}-1} \|u\|_{H^{s_2}}$ for $t>0$.
		\item[{\rm(ii)}] If $m $ is still nonincreasing, then
		\begin{align*}
			\left\|(-\Delta)^{\frac{\alpha}{2}}G_{\gamma,\alpha}(t,\cdot)\star u \right\|_{H^{s_1}} \leq m_\gamma(t)[(1\ast m_\gamma)(t)]^{\frac{s_2-s_1}{\alpha}-1} \|u\|_{H^{s_2}}, \quad t>0.
		\end{align*}
	\end{itemize}
	Indeed, it follows from Lemma \ref{lemma 3.1}(ii) that
	\begin{align*}
		&\quad \left\|(-\Delta)^{\frac{\alpha}{2}}Q_{\gamma,\alpha}(t,\cdot)\star u \right\|_{H^{s_1}} =\left\|(-\Delta)^{\frac{\alpha}{2}}(I-\Delta)^{\frac{s_1}{2}}Q_{\gamma,\alpha}(t,\cdot)\star u \right\|_{L^2}\\
		&=\left\|(I-\Delta)^{\frac{s_1}{2}}Q_{\gamma,\alpha}(t,\cdot)\star u \right\|_{{\dot H}^\alpha}=\left\|Q_{\gamma,\alpha}(t,\cdot)\star (I-\Delta)^{\frac{s_1}{2}}u \right\|_{{\dot H}^\alpha}\\
		&\leq [(1\ast m_\gamma)(t)]^{\frac{s_2-s_1}{\alpha}-1} \|(I-\Delta)^{\frac{s_1}{2}}u\|_{{\dot H}^{s_2-s_1}} \leq [(1\ast m_\gamma)(t)]^{\frac{s_2-s_1}{\alpha}-1} \|(I-\Delta)^{\frac{s_1}{2}}u\|_{ H^{s_2-s_1}}\\
		&=[(1\ast m_\gamma)(t)]^{\frac{s_2-s_1}{\alpha}-1} \|u\|_{H^{s_2}}.
	\end{align*}
	Similarly, Lemma \ref{lemma 3.2} (ii) follows (ii).
	\end{remark}
	
	Now we give the continuity of fundamental solutions $Q_{\gamma,\alpha}$ and $G_{\gamma,\alpha}$ in Sobolev spaces.

	\begin{lemma}\label{lemma 4.1}
Let the kernel $m$ satisfy $(\mathcal{H})$ and $s \geq 0$. Then the following assertions holds.
\begin{itemize}
	\item[{\rm(i)}] The operators $Q_{\gamma,\alpha}(t,\cdot)\star u$ and $(-\Delta)^{\frac{\alpha}{2}}Q_{\gamma,\alpha}(t,\cdot)\star u$ are continuous with respect to $t\in(0, \infty)$ in $H^{s}$. Moreover, we have
	\begin{align*}
		&\|Q_{\gamma,\alpha}(t_2, \cdot)\star u - Q_{\gamma, \alpha}(t_1,\cdot)\star u\|_{{H}^{s}}
		\leq \ln \frac{t_2+\gamma(1\ast m)(t_2)}{t_1+ \gamma(1\ast m)(t_1)}\|u\|_{{H}^{s}} \text{ ~for~ }  t_2 > t_1>0,
	\end{align*}
	and $Q(t,\cdot)\star u \to u$ as $t \to 0^+$.
	\item[{\rm(ii)}]If $m$ is still nonincreasing on $(0,\infty)$, then for  $s_2 >s_1\geq 0$ and $t_2>t_1>0$. It holds that
		\begin{align*}
			\left\|Q_{\gamma,\alpha}(t_2,\cdot)\star u- Q_{\gamma,\alpha}(t_1,\cdot)\star u\right\|_{H^{s_1}} \leq \frac{\alpha}{s_2-s_1}\left(\int_{t_1}^{t_2}m_\gamma(\tau)\, \mathrm{d}\tau \right)^{\frac{s_2-s_1}{\alpha}}\|u\|_{H^{s_2}}.
		\end{align*}
		Futhermore, we have
		\begin{align*}
			\|Q_{\gamma,\alpha}(t,\cdot)\star u-u\|_{H^{s_1}} \leq [1\ast m_\gamma(t)]^{\frac{s_2-s_1}{\alpha}}\|u\|_{H^{s_2}} \text{ ~for~ } t>0.
	\end{align*}
	\item[{\rm(iii)}] Assume further that $m$ is nonincreasing on $(0,\infty)$ for $\beta_\gamma>0$ or $m$ is nonincreasing and continuous for $\beta_\gamma=0$. Then the operators
	$G_{\gamma,\alpha}(t,\cdot)\star u$ and $(-\Delta)^{\frac{\alpha}{2}}G_{\gamma,\alpha}(t,\cdot)\star u$
	are continuous with respect to $t\in(0, \infty)$ in $H^{s}$.
\end{itemize}

\begin{proof}
	(i) Recall that $\tilde{Q}_{\gamma, \alpha}(t, \xi) = s_\gamma(t,|\xi|^\alpha)$. We apply Proposition \ref{prop 2.1} (ii) and (iii) to obtain that
	\begin{align*}
		&\quad\|Q_{\gamma,\alpha}(t_2, \cdot)\star u - Q_{\gamma,\alpha} (t_1, \cdot)\star u\|^2_{H^{s}}\\
		&=\int_{\mathbb R^N} \left(\int_{t_1}^{t_2}|\xi|^{\alpha}r_\gamma(\tau,|\xi|^\alpha)\, \mathrm{d}\tau \right)^2 |\tilde{u}(\xi)|^2(1+|\xi|^2)^{s}\, \mathrm{d}\xi\\
		&\leq \int_{\mathbb R^N} \left(\int_{t_1}^{t_2} \frac{m_{\gamma}(\tau)|\xi|^\alpha}{1+|\xi|^{\alpha}(1\ast m_{\gamma})(\tau)}\, \mathrm{d}\tau \right)^2 |\tilde{u}(\xi)|^2(1+|\xi|^2)^{s}\, \mathrm{d}\xi\\
		&\leq \left(\int_{t_1}^{t_2} m_{\gamma}(\tau)\left[(1\ast m_{\gamma})(\tau)\right]^{-1}\, \mathrm{d}\tau \right)^2 \|u\|^2_{H^{s}}\\
		&=
		\left[\ln \frac{(1\ast m_{\gamma})(t_2)}{ (1\ast m_{\gamma})(t_1)}\right]^2\|u\|^2_{H^{s}},
	\end{align*}
	this yields the continuity of $Q_{\gamma,\alpha}(t,\cdot)\star u$ in $H^s$ and also ensures the inequality in (i).
	
	To prove that $
	\lim_{t\to 0^+}\left\|Q_{\gamma,\alpha}(t, \cdot)\star u - u\right\|^2_{H^s}=0$,
	we will use the Lebesgue's dominated convergence theorem. Therefore,
	we needs to verify that
	\begin{align*}
		(s_\gamma(t, |\xi|^\alpha)-1)^2 |\tilde{u}(\xi)|^2(1+|\xi|^2)^{s}\leq 4 |\tilde{u}(\xi)|^2(1+|\xi|^2)^{s},\quad \xi \in \mathbb R^N
	\end{align*}
	and $\lim_{t \to 0^+}s_\gamma(t, |\xi|^\alpha)=1$. Clearly, it easily follows from  Proposition \ref{prop 2.1} (ii).

	It remains to show the continuity of $(-\Delta)^{\frac{\alpha}{2}}Q_{\gamma,\alpha}(t,\cdot)\star u$ in $H^s$.  Fix $t \in (0, \infty)$. Let $0<|h|<\frac{t}{2}$. It follows from Proposition \ref{prop 2.1} (iii) that
	\begin{align*}
		|\xi|^\alpha s_\gamma(t,|\xi|^\alpha) \leq
		[(1 \ast m_{\gamma})(t)]^{-1}.
	\end{align*}
	Using this and nonnegativity of $m$, we arrive at
	\begin{align*}
		&\quad|\xi|^{2\alpha}\left[s_\gamma(t+h,|\xi|^\alpha) - s_\gamma(t,|\xi|^\alpha) \right]^2 |\tilde{u}(\xi)|^2(1+|\xi|^2)^s \\
		&\leq 4\left[(1 \ast m_{\gamma})\left(\frac{t}{2}\right)\right]^{-2}|\tilde{u}(\xi)|^2(1+|\xi|^2)^s.
	\end{align*}
	This implies that	
	\begin{align*}
		&\quad \lim_{h\to 0} \left\|(-\Delta)^{\frac{\alpha}{2}}Q_{\gamma,\alpha}(t+h, \cdot)\star u-(-\Delta)^{\frac{\alpha}{2}}Q_{\gamma,\alpha}\star u\right\|^2_{H^s}\\
		&= \int_{\mathbb R^N} \lim_{h\to 0}|\xi|^{2\alpha}\left[s_\gamma(t+h,|\xi|^\alpha) - s_\gamma(t,|\xi|^\alpha) \right]^2 |\tilde{u}(\xi)|^2(1+|\xi|^2)^s\, \mathrm{d}\xi=0,
	\end{align*}
	where we have used Proposition \ref{prop 2.1} (ii) and the Lebesgue's dominated convergence theorem again.
	
	(ii) From Proposition \ref{prop 2.1} (ii) and (iii) we obtain,
		\begin{align*}
			&\quad\|Q_{\gamma,\alpha}(t_2, \cdot)\star u - Q_{\gamma,\alpha} (t_1, \cdot)\star u\|^2_{H^{s_1}}\\
			&=\int_{\mathbb R^N} \left(\int_{t_1}^{t_2}|\xi|^{\alpha}r_\gamma(\tau,|\xi|^\alpha)\, \mathrm{d}\tau \right)^2 |\tilde{u}(\xi)|^2(1+|\xi|^2)^{s_1}\, \mathrm{d}\xi\\
			&\leq \int_{\mathbb R^N} \left(\int_{t_1}^{t_2} \frac{m_{\gamma}(\tau)|\xi|^{\alpha-(s_2-s_1)}}{1+|\xi|^{\alpha}(1\ast m_{\gamma})(\tau)}\, \mathrm{d}\tau \right)^2 |\tilde{u}(\xi)|^2(1+|\xi|^2)^{s_2}\, \mathrm{d}\xi\\
			&\leq \left(\int_{t_1}^{t_2} m_\gamma(\tau) \left[(1\ast m_\gamma)(\tau)\right]^{\frac{s_2-s_1}{\alpha}-1}\, \mathrm{d}\tau \right)^2\|u\|^2_{H^{s_2}}\\
			&\leq \frac{\alpha}{s_2-s_1}\left(\int_{t_1}^{t_2}m_\gamma(\tau)\, \mathrm{d}\tau \right)^{2\frac{s_2-s_1}{\alpha}}\|u\|^2_{H^{s_2}}.
		\end{align*}
		Similar proof can lead to the second assertion of (ii).
	
	(iii) Recalling Proposition \ref{prop 2.1} (iii), we observe that
	$r_\gamma(t,|\xi|^\alpha) \leq m_{\gamma}(t)$ and
	\begin{align}\label{4.2}
		|\xi|^\alpha r_\gamma(t,|\xi|^\alpha) \leq
		m_{\gamma}(t)[(1 \ast m_{\gamma})(t)]^{-1},
	\end{align}
	where we have also used the nonincreasity of $m$ in \eqref{4.2}. Then it easily check that
	\begin{align*}
		\left[r_\gamma(t+h,|\xi|^\alpha) - r_\gamma(t,|\xi|^\alpha) \right]^2 |\tilde{u}(\xi)|^2(1+|\xi|^2)^s
		\leq 4\left[m_{\gamma}\left(\frac{t}{2}\right)\right]^{2}|\tilde{u}(\xi)|^2(1+|\xi|^2)^s
	\end{align*}
	and
	\begin{align*}
		&\quad|\xi|^{2\alpha}\left[r_\gamma(t+h,|\xi|^\alpha) - r_\gamma(t,|\xi|^\alpha) \right]^2 |\tilde{u}(\xi)|^2(1+|\xi|^2)^s \\
		&\leq 4\left[m_{\gamma}\left(\frac{t}{2}\right)\right]^{2}\left[(1 \ast m_{\gamma})\left(\frac{t}{2}\right)\right]^{-2}|\tilde{u}(\xi)|^2(1+|\xi|^2)^s.
	\end{align*}
	Noting the continuity of $r_\gamma(t,|\xi|^\alpha)$ with respect to $t$ (thanks to Lemma \ref{newprop 2.2}), we can conclude the continuity of $G_{\gamma,\alpha}(t,\cdot)\star u$ and $(-\Delta)^{\frac{\alpha}{2}}G_{\gamma,\alpha}(t,\cdot)\star u$ as the same way as we derived (i).
\end{proof}
\end{lemma}

\section{Classical solution and convergence of linear problem}
In this section, we consider the classical solutions of the linear problem
\begin{align}\label{neweq 1.2}
\begin{cases}
	\partial_{t}v_\alpha + (-\Delta)^{\frac{\alpha}{2}}
	v_\alpha + \gamma \partial_{t}(m \ast (-\Delta)^{\frac{\alpha}{2}} v_\alpha) =g(t,x), \quad t>0, \ x \in \mathbb R^N, \\
	v_{\alpha}|_{t=0}=v_{0,\alpha}(x), \quad  x \in \mathbb R^N.
\end{cases}
\end{align}
where $g:(0,T]\times \mathbb R^N \mapsto \mathbb R$ is a function.
In what follows, we first the concept of classical solutions to the problem
\eqref{neweq 1.2}.
\begin{definition}\label{def 4.1}
For $s\geq 0$ and $v_{0,\alpha} \in H^s$. Let $g(t):(0,T]\mapsto H^s$ for $T>0$. The function $v_\alpha(t,x)$ is called a classical solution to the problem \eqref{neweq 1.2} in $H^s$, if and only if $v_\alpha \in C\left([0,T]; H^s \right)$ with $(-\Delta)^{\frac{\alpha}{2}}v_\alpha \in H^s$, and  $\partial_t v_\alpha,~\partial_t (m\ast (-\Delta )^{\frac{\alpha}{2}}v_\alpha) \in  C\left((0,T]; H^s \right)$. Moreover, $v_\alpha$ satisfies the integral equation
\begin{align}\label{4.1}
	v_\alpha(t)= Q_{\gamma,\alpha}(t,\cdot) \star v_{0, \alpha} + \int_0^t Q_{\gamma,\alpha}(t-\tau, \cdot) \star g(s) \mathrm{d}s, \quad (t,x) \in \mathbb (0, T] \times \mathbb R^N.
\end{align}
\end{definition}

Put $D((-\Delta)^{\frac{\alpha}{2}}):=\{u: u \in H^s, (-\Delta)^{\frac{\alpha}{2}}u \in H^{s}\}$. In fact, we also write $D((-\Delta)^{\frac{\alpha}{2}})=H^{s+\alpha}$. Clearly, $(-\Delta)^{\frac{\alpha}{2}}: H^{s+\alpha} \to H^{s}$ is a bounded linear operator.

\subsection{Existence of Classical solutions}	
In the following lemma, we collect the regularity results of the fundamental solutions.
\begin{lemma}\label{lemma 4.2}
Let $m\in (\mathcal{H})$ and $u \in H^s$ for $s \geq 0$. Then  $(-\Delta)^{\frac{\alpha}{2}} Q_{\gamma,\alpha}(t, \cdot) \star u \in H^s$ for $t \in (0,\infty)$. Assume further that $m$ is nonincreasing on $(0,\infty)$. Then the following assertions holds.
\begin{itemize}
	\item[(i)] We have $(-\Delta)^{\frac{\alpha}{2}} G_{\gamma,\alpha}(t, \cdot) \star u \in H^s$ and $\partial_t \left[Q_{\gamma,\alpha}(t, \cdot)\star u\right]=-(-\Delta)^{\frac{\alpha}{2}}G_{\gamma,\alpha}(t,\cdot)\star u$ for $t \in (0,\infty)$.
	\item[(ii)] When $\beta_\gamma>0$ or $\beta_\gamma=0$ and $m$ is continuous, we have $\partial_t \left( m_{\gamma} \ast Q_{\gamma,\alpha}(\cdot, \cdot)\star u\right)(t)=G_{\gamma,\alpha}(t,\cdot)\star u$ for $t \in (0,\infty)$.
\end{itemize}
\begin{proof}
	Let $u \in H^s$ and $t \in (0,\infty)$. Using Proposition \ref{prop 2.1} (iii) we get
	\begin{align*}
		&\quad \left\|(-\Delta)^{\frac{\alpha}{2}}Q_{\gamma,\alpha} \star u \right\|_{H^s} \\
		&= \left(\int_{\mathbb R^N}|\xi|^{2\alpha}s^2_\gamma(t, |\xi|^\alpha) |\tilde{u}(\xi)|^2(1+|\xi|^2)^{s}\, \mathrm{d}\xi \right)^{\frac{1}{2}}\\
		&\leq [(1\ast m_{\gamma})(t)]^{-1} \left(\int_{\mathbb R^N}\frac{|\xi|^{2\alpha}[(1\ast m_{\gamma})(t)]^2}{\left(1+|\xi|^\alpha(1\ast m_{\gamma})(t)\right)^2}|\tilde{u}(\xi)|^2(1+|\xi|^2)^{s} \, \mathrm{d}\xi \right)^{\frac{1}{2}}\\
		&\leq [(1\ast m_{\gamma})(t)]^{-1}\|u\|_{H^s},
	\end{align*}
	which implies $(-\Delta)^{\frac{\alpha}{2}} Q_{\gamma,\alpha}(t, \cdot) \star u \in H^s$ for $t \in (0,\infty)$.
	
	(i)  Since $m$ is nonincreasing on $(0,\infty)$, it is easy to verify that $(-\Delta)^{\frac{\alpha}{2}} G_{\gamma,\alpha}(t, \cdot) \star u \in H^s$ for $t \in (0,\infty)$ by the similar arguments.

	Fix $t \in (0, \infty)$, we let $0<|h|<\frac{t}{2}$. Since
	\begin{align*}
		\frac{1}{h}\left[Q_{\gamma,\alpha}(t+h, \cdot)-Q_{\gamma,\alpha}(t, \cdot)\right]\star u=\mathcal F^{-1}\left(\frac{1}{h} \left[s_\gamma(t+h, |\xi|^\alpha)-s_\gamma(t, |\xi|^\alpha)\right] \tilde {u}(\xi)\right).
	\end{align*}
	Applying Proposition \ref{prop 2.1} (ii), one has
	\begin{align*}
		\lim\limits_{h \to 0} \frac{1}{h} \left[s_\gamma(t+h, |\xi|^\alpha)-s_\gamma(t, |\xi|^\alpha)\right] \tilde {u}(\xi)
		=-|\xi|^\alpha r_\gamma(t,|\xi|^\alpha)\tilde{u}(\xi),
	\end{align*}
	and there exists a $t_1$ in the range of $t$ to $t+h$ such that
	\begin{align*}
		\frac{1}{h} |\left[s_\gamma(t+h, |\xi|^\alpha)-s_\gamma(t, |\xi|^\alpha)\right] \tilde {u}(\xi)|
		&=\left||\xi|^\alpha r_\gamma(t_1,|\xi|^\alpha)\tilde{u}(\xi) \right|\\
		&\leq m_{\gamma}\left(\frac{t}{2}\right)\left[(1 \ast m_{\gamma})\left(\frac{t}{2}\right)\right]^{-1}
		\left|\tilde{u}(\xi)\right|,
	\end{align*}
	where the above inequality follows from the mean value theorem of the integral and \eqref{4.2}. Therefore, the Lebesgue's dominated convergence theorem implies that
	\begin{align*}
		\lim\limits_{h \to 0}\frac{1}{h}\left(Q_{\gamma,\alpha}(t+h, \cdot)-Q_{\gamma,\alpha}(t, \cdot)\right)\star u
		=-(-\Delta)^{\frac{\alpha}{2}}G_{\gamma,\alpha}(t,\cdot)\star u~~{\rm in}~H^s.
	\end{align*}
	Then the assertion (i) holds.
	
	(ii)  Noting the strong continuity of the function $G_{\gamma,\alpha}$ with respect to $t$ on $(0,\infty)$ (thanks to Lemma \ref{lemma 4.1} (iii)), we use immediately Proposition \ref{prop 2.2} (ii) to know that
	\begin{align*}
		\partial_t \left( m_{\gamma} \ast Q_{\gamma,\alpha}(\cdot, \cdot)\star u\right)(t) =\partial_t (1 \ast G_{\gamma,\alpha}(\cdot, \cdot)\star u)(t) = G_{\gamma,\alpha}(t, \cdot)\star u ~~{\rm in}~H^s.
	\end{align*}
	We complete the proof.
\end{proof}
\end{lemma}
\begin{remark}\label{nwlemma 3.4}
Let the conditions of Lemma \ref{lemma 4.2} be satisfied. If $u \in D((-\Delta)^{\frac{\alpha}{2}})$, then
$(-\Delta)^{\frac{\alpha}{2}} Q_{\gamma,\alpha}(t, \cdot) \star u=Q_{\gamma,\alpha}(t, \cdot) \star (-\Delta)^{\frac{\alpha}{2}} u$ and $(-\Delta)^{\frac{\alpha}{2}} Q_{\gamma,\alpha}(t, \cdot) \star u=Q_{\gamma,\alpha}(t, \cdot) \star (-\Delta)^{\frac{\alpha}{2}} u$ for $t\in(0,\infty)$. Moreover, we have
$\partial_t \left[Q_{\gamma,\alpha}(t, \cdot)\star u\right]=-G_{\gamma,\alpha}(t,\cdot)\star (-\Delta)^{\frac{\alpha}{2}}u$ for $t\in(0,\infty)$.
\end{remark}

Now we are ready to present the main result.
\begin{theorem}\label{the 4.1}
Let the kernel $m\in(\mathcal{H})$  be nonincreasing
and continuous on $(0,\infty)$. For $s\geq 0$, $T>0$ and $\theta_1>0$, we assume that $g:(0,T] \to H^{s+\theta_1}$ is bounded and continuous. Then, for every $v_{0,\alpha} \in H^{s+\theta_2}$ with $\theta_2>0$, the problem \eqref{neweq 1.2} admits a unique classical solution.
\end{theorem}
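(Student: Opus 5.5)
We take as candidate the function given by the variation of parameters formula \eqref{4.1},
\begin{align*}
v_\alpha(t)=Q_{\gamma,\alpha}(t,\cdot)\star v_{0,\alpha}+\int_0^t Q_{\gamma,\alpha}(t-\tau,\cdot)\star g(\tau)\,\mathrm{d}\tau=:v^{(1)}(t)+v^{(2)}(t).
\end{align*}
We verify the regularity requirements of Definition \ref{def 4.1} separately for $v^{(1)}$ and $v^{(2)}$. Uniqueness is handled at the end. Since $\theta_1,\theta_2>0$ may be small, we shrink them so that $\theta_i<\alpha$, which is harmless since $H^{s+\theta}\subset H^{s+\theta'}$ for $\theta'<\theta$.

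For $v^{(1)}$ we proceed as follows. Continuity on $(0,T]$ in $H^s$ and the limit $v^{(1)}(t)\to v_{0,\alpha}$ as $t\to0^+$ follow from Lemma \ref{lemma 4.1}(i). The fact that $(-\Delta)^{\frac\alpha2}v^{(1)}(t)\in H^s$ for $t>0$ follows from Lemma \ref{lemma 4.2}. By Lemma \ref{lemma 4.2}(i), $\partial_t v^{(1)}=-(-\Delta)^{\frac\alpha2}G_{\gamma,\alpha}(t)\star v_{0,\alpha}$, and Lemma \ref{lemma 4.1}(iii) shows this is continuous on $(0,T]$ in $H^s$. For the memory term we write
\begin{align*}
\partial_t\big(m\ast(-\Delta)^{\frac\alpha2}v^{(1)}\big)=\gamma^{-1}\big[\partial_t(m_\gamma\ast(-\Delta)^{\frac\alpha2}v^{(1)})-(-\Delta)^{\frac\alpha2}v^{(1)}\big].
\end{align*}
By Lemma \ref{lemma 4.2}(ii), applied in the form of Remark \ref{nwlemma 3.4}, we have $\partial_t(m_\gamma\ast(-\Delta)^{\frac\alpha2}Q\star v_{0,\alpha})=(-\Delta)^{\frac\alpha2}G_{\gamma,\alpha}(t)\star v_{0,\alpha}$. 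This expression is continuous by Lemma \ref{lemma 4.1}(iii), which is where we use that $m$ is continuous when $\beta_\gamma=0$. One point needs care: $(1\ast G)$ must be differentiable after applying $(-\Delta)^{\frac\alpha2}$. We justify this through the Fourier-side dominated convergence bound \eqref{4.2}. The equation itself then holds pointwise for $t>0$, because $\partial_t\tilde Q=-|\xi|^\alpha r_\gamma$ and $r_\gamma=\partial_t(m_\gamma\ast s_\gamma)$ by Proposition \ref{prop 2.2}(ii).

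The main obstacle is $v^{(2)}$, the Duhamel term, because the kernel $\partial_tQ$ has a singularity at $\tau=t$. We first use the smoothing of the data: since $g(\tau)\in H^{s+\theta_1}$, Remark \ref{lemma 3.4} gives
\begin{align*}
\|(-\Delta)^{\frac\alpha2}Q_{\gamma,\alpha}(t-\tau)\star g(\tau)\|_{H^s}\le[(1\ast m_\gamma)(t-\tau)]^{\frac{\theta_1}{\alpha}-1}\sup\|g\|_{H^{s+\theta_1}}.
\end{align*}
Since $(1\ast m_\gamma)(t)\ge t$, the right-hand side is integrable in $\tau$, and therefore $(-\Delta)^{\frac\alpha2}v^{(2)}(t)\in H^s$. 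To differentiate we use the splitting
\begin{align*}
v^{(2)}(t+h)-v^{(2)}(t)=\int_t^{t+h}Q(t+h-\tau)\star g(\tau)\,\mathrm{d}\tau+\int_0^t[Q(t+h-\tau)-Q(t-\tau)]\star g(\tau)\,\mathrm{d}\tau .
\end{align*}
Together with Lemma \ref{lemma 4.2}(i), this yields
\begin{align*}
\partial_tv^{(2)}(t)=g(t)-\int_0^t(-\Delta)^{\frac\alpha2}G_{\gamma,\alpha}(t-\tau)\star g(\tau)\,\mathrm{d}\tau .
\end{align*}
Here the first piece converges to $g(t)$ by continuity of $g$ and Lemma \ref{lemma 4.1}(i). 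For the second piece, the integrand is bounded, via Remark \ref{lemma 3.4}(ii), by $m_\gamma(t-\tau)[(1\ast m_\gamma)(t-\tau)]^{\frac{\theta_1}{\alpha}-1}$. This bound is integrable, since it equals $\frac{\alpha}{\theta_1}\frac{d}{dt}[(1\ast m_\gamma)]^{\theta_1/\alpha}$, and it justifies dominated convergence. Continuity in $t$ follows in the same way, using Lemma \ref{lemma 4.1}(iii). The memory term is treated by the identity $m_\gamma\ast(-\Delta)^{\frac\alpha2}v^{(2)}=1\ast(-\Delta)^{\frac\alpha2}G\ast g$ from Proposition \ref{prop 2.2}(ii). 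Its derivative is $\int_0^t(-\Delta)^{\frac\alpha2}G(t-\tau)\star g(\tau)\,\mathrm{d}\tau$, which is continuous by the same domination. Summing the pieces recovers the equation, and the term $g(t)$ appears exactly once.

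Uniqueness follows from linearity. Let $w$ be the difference of two classical solutions, so that $w$ solves the homogeneous problem with $w(0)=0$. Integrating in time gives $w+m_\gamma\ast(-\Delta)^{\frac\alpha2}w=0$. On the Fourier side this becomes $\tilde w+|\xi|^\alpha m_\gamma\ast\tilde w=0$, and by the uniqueness statement in Proposition \ref{prop 2.1}(i) we conclude $\tilde w\equiv0$. Continuity of $v_\alpha$ at $t=0$ in $H^s$ requires in addition that $v^{(2)}(t)\to0$, which holds because $\|v^{(2)}(t)\|_{H^s}\le t\sup\|g\|_{H^s}$ by Lemma \ref{lemma 3.1}.
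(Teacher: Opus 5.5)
Your proposal is correct and follows essentially the same route as the paper: you split the solution into $Q_{\gamma,\alpha}(t,\cdot)\star v_{0,\alpha}$ plus the Duhamel term, split the difference quotient with the integrable majorant $m_\gamma(t-\tau)[(1\ast m_\gamma)(t-\tau)]^{\theta_1/\alpha-1}$ from Remark \ref{lemma 3.4}, and handle the memory term through the identity $m_\gamma\ast Q_{\gamma,\alpha}=1\ast G_{\gamma,\alpha}$ of Proposition \ref{prop 2.2}(ii). The deviations are minor: you commute $(-\Delta)^{\frac{\alpha}{2}}$ with the time convolution on the Fourier side via \eqref{4.2}, whereas the paper uses Bochner integrability in $H^{s+\alpha}$, which is where it spends $\theta_2>0$; and you prove uniqueness directly from Volterra uniqueness, whereas the paper cites uniqueness of mild solutions, which under Definition \ref{def 4.1} amounts to the same thing.
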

\begin{proof}
\emph{Step 1.} For $T>0$, we set $v_{\alpha,1}(t):=Q_{\gamma,\alpha}(t, \cdot) \star v_{0,\alpha}(x)$ for $t\in (0,T],~x\in \mathbb R^N$. We prove that $v_{\alpha,1}(t)$ is a classical solution to the following equation
\begin{align}\label{4.3}
	\begin{cases}
		\partial_t v(t)+ (-\Delta)^{\frac{\alpha}{2}} v(t)+ \gamma\partial_t m \ast (-\Delta)^{\frac{\alpha}{2}} v(t)=0, \quad t \in (0,T],\\
		v|_{t=0}=v_{0,\alpha}.
	\end{cases}
\end{align}
Notice $H^{s+\theta} \subset H^s$ for any $\theta>0$. It yields immediately from Lemma \ref{lemma 4.1} (i) that $v_{\alpha,1} \in C\left([0,T]; H^s \right)$ and $(-\Delta)^{\frac{\alpha}{2}}v_{\alpha,1} \in  C\left((0,T]; H^s \right)$. Then $v_{\alpha,1}(t)\in D((-\Delta)^{\frac{\alpha}{2}})$ for $t\in(0,T]$. Applying Lemma \ref{lemma 4.1} (iii) and Lemma \ref{lemma 4.2} (i), we obtain
\begin{align}\label{4.4}
	\partial_t v_{\alpha,1}(t) = - (-\Delta)^{\frac{\alpha}{2}}G_{\gamma,\alpha}(t,\cdot)\star v_{0,\alpha}
\end{align}
and $\partial_t v_{\alpha,1}\in  C\left((0,T]; H^s\right)$.

Next, we claim that $\partial_t (m_{\gamma}\ast (-\Delta)^{\frac{\alpha}{2}} v_{\alpha,1}) \in  C\left((0,T]; H^s\right)$. In fact, it suffices to verify the following relation
\begin{align}\label{4.5}
	[m_{\gamma}\ast (-\Delta)^{\frac{\alpha}{2}} v_{\alpha,1}](t)
	=[1\ast (-\Delta)^{\frac{\alpha}{2}}G_{\gamma,\alpha}(\cdot,\cdot) \star v_{0,\alpha}](t) \quad{\rm for}~ t>0.
\end{align}
We consider the right term of \eqref{4.5}. For $v_{0,\alpha} \in H^{s+\theta_2}$, let us do this in two cases: $\theta_2\in(0,\alpha]$ and $\theta_2\in(\alpha,\infty)$.
For $\theta_2\in(0,\alpha]$, we use Lemma \ref{lemma 3.2} (i) to calculate
\begin{align*}
	\int_0^t \left\|G_{\gamma,\alpha}(\tau,\cdot)\star v_{0,\alpha}\right\|_{H^{s+\alpha}}\, \mathrm{d}\tau &\leq 2^{\frac{\alpha-\theta}{2}}\|v_{0,\alpha}\|_{H^{s+\theta_2}}\int_0^t m_{\gamma}(\tau)\left(1+[(1\ast m_{\gamma})(\tau)]^{\frac{\theta_2}{\alpha}-1}\right) \, \mathrm{d}\tau\\
	&\leq 2^{\frac{\alpha-\theta_2}{2}} \|v_{0,\alpha}\|_{H^{s+\theta_2}} \left((1\ast m_\gamma)(t) +\frac{\alpha}{\theta_2}[(1\ast m_{\gamma})(t)]^{\frac{\theta_2}{\alpha}}\right).
\end{align*}
For $\theta_2\in(\alpha,\infty)$, we observe that $H^{s+\theta_2}\hookrightarrow H^{s+\alpha}$.
Therefore, $G_{\gamma,\alpha} \star v_{0,\alpha} \in L^1([0,t];H^{s+\alpha})$ for $\theta_2>0$. Since the operator $(-\Delta)^{\frac{\alpha}{2}}: H^{s+\alpha} \to H^{s}$ is linear and bounded, we have
\begin{align}\label{4.6}
	[1\ast (-\Delta)^{\frac{\alpha}{2}}G_{\gamma,\alpha}(\cdot,\cdot) \star v_{0,\alpha}](t)=(-\Delta)^{\frac{\alpha}{2}}[1\ast G_{\gamma,\alpha}(\cdot,\cdot) \star v_{0,\alpha}](t)  \quad{\rm for}~t>0.
\end{align}
We continue \eqref{4.6} with Lemma \ref{lemma 4.2} (i) to show
\begin{align}\label{4.7}
	[1\ast (-\Delta)^{\frac{\alpha}{2}}G_{\gamma,\alpha}(\cdot,\cdot) \star v_{0,\alpha}](t)
	=(-\Delta)^{\frac{\alpha}{2}}[m_{\gamma}\ast  Q_{\gamma,\alpha}(\cdot,\cdot) \star v_{0,\alpha}](t).
\end{align}

On the other hand, we consider the left term of \eqref{4.5}. Similarly, we only need to consider $v_{0,\alpha} \in H^{s+\theta_2}$ for $\theta_2\in(0,\alpha]$. Using the nonincreasity of the kernel $m$ and Lemma \ref{lemma 3.1} (i), we calculate that
\begin{align*}
	&\quad \int_0^t m_{\gamma}(t-\tau)\|Q_{\gamma,\alpha}(\tau,\cdot) \star v_{0,\alpha}\|_{H^{s+\alpha}}\, \mathrm{d}\tau\\
	&\leq 2^{\frac{\alpha-\theta_2}{2}}\| v_{0,\alpha}\|_{H^{s+\theta_2}}\int_0^t m_{\gamma}(t-\tau)\left(1+ [(1\ast m_{\gamma})(\tau)]^{\frac{\theta_2}{\alpha}-1}\right)\, \mathrm{d}\tau\\
	&\leq 2^{\frac{\alpha-\theta_2}{2}}\| v_{0,\alpha}\|_{H^{s+\theta_2}}\left(\int_0^{\frac{t}{2}} m_{\gamma}(\tau)\left(1+[(1\ast m_{\gamma})(\tau)]^{\frac{\theta_2}{\alpha}-1}\right)\, \mathrm{d}\tau \right.\\
	&\quad+ \left.\int_{\frac{t}{2}}^{t} m_{\gamma}(t-\tau)\left(1+[(1\ast m_{\gamma})(\tau)]^{\frac{\theta_2}{\alpha}-1}\right)\, \mathrm{d}\tau\right)\\
	&\leq 2^{\frac{\alpha-\theta_2}{2}}\| v_{0,\alpha}\|_{H^{s+\theta_2}}\left(2(1\ast m_\gamma)\left(\frac{t}{2}\right) + \left( 1+\frac{\alpha}{\theta_2} \right)\left[1\ast m_{\gamma}\left(\frac{t}{2}\right)\right]^{\frac{\theta_2}{\alpha}}\right).
\end{align*}
As we derived \eqref{4.6}, it also holds
\begin{align}\label{4.8}
	(-\Delta)^{\frac{\alpha}{2}}[m_{\gamma}\ast  Q_{\gamma,\alpha}(\cdot,\cdot) \star v_{0,\alpha}](t)=[m_{\gamma}\ast  (-\Delta)^{\frac{\alpha}{2}}Q_{\gamma,\alpha}(\cdot,\cdot) \star v_{0,\alpha}](t)
\end{align}
for $v_{0,\alpha} \in H^{s+\theta_2}$ with $\theta_2>0$.
Combing  \eqref{4.7} and \eqref{4.8}, the relation \eqref{4.5} holds. Then it implies
that		\begin{align}\label{4.9}
	\partial_t [m_{\gamma}\ast (-\Delta)^{\frac{\alpha}{2}} v_{\alpha,1}]
	=(-\Delta)^{\frac{\alpha}{2}}G_{\gamma,\alpha}(\cdot,\cdot) \star v_{0,\alpha}.
\end{align}	
Thanks to $\partial_t [m\ast (-\Delta)^{\frac{\alpha}{2}} v_{\alpha,1}] =\frac{1}{\gamma}\partial_t [m_{\gamma}\ast (-\Delta)^{\frac{\alpha}{2}} v_{\alpha,1}]- (-\Delta)^{\frac{\alpha}{2}} v_{\alpha,1}$, we have
$$
\partial_t [m\ast (-\Delta)^{\frac{\alpha}{2}} v_{\alpha,1}] \in  C\left((0,T]; H^s \right).
$$
This together with \eqref{4.4} and \eqref{4.9} shows that $v_{\alpha,1}$ is a classical solution to the equation \eqref{4.3}.

\emph{Step 2.} Let us put $v_{\alpha,2}(t)=\int_0^t Q_{\gamma,\alpha}(t-\tau, \cdot) \star g(\tau)\, \mathrm{d}\tau, \ t>0$. Then we need to prove that $v_{\alpha,2}$ is also a classical solution to the following equation
\begin{align}\label{4.10}
	\begin{cases}
		\partial_t v(t)+ (-\Delta)^{\frac{\alpha}{2}} v(t)+ \gamma \partial_t m \ast (-\Delta)^{\frac{\alpha}{2}}v(t)=g(t), \quad t \in (0,T],\\
		v|_{t=0}=0.
	\end{cases}
\end{align}
To do this, we firstly show $v_{\alpha,2} \in C\left([0,T]; H^s \right) \bigcap C^1\left((0,T]; H^s \right)$. In fact, since
\begin{align*}
	\left\|\int_0^t Q_{\gamma,\alpha}(t-\tau, \cdot)\star g(\tau)\, \mathrm{d}\tau\right\|_{H^s}
	&\leq \int_0^t \|Q_{\gamma,\alpha}(t-\tau, \cdot)\star g(\tau) \|_{H^s}\, \mathrm{d}\tau\\
	&\leq 2\int_0^t \|g(\tau)\|_{H^s}\, \mathrm{d}\tau \leq 2 t \sup_{\zeta \in (0,T]} \|g(\zeta)\|_{H^{s+\theta_1}} \to 0 \text{ ~as~ } t \to 0^+,
\end{align*}		
it follows from	the assumptions on $g$ and Lemma \ref{lemma 3.1} (i) that $v_{\alpha,2} \in C\left([0,T]; H^s \right)$.

It remains to verify that $\partial_t v_{\alpha,2} \in  C\left((0,T]; H^s \right)$. For arbitrary $t\in (0,T)$ fixed, we choose $0<h<\min\{\frac{t}{2}, \frac{T-t}{2}\}$. Let us consider
\begin{align*}
	\frac{v_{\alpha,2}(t+h)-v_{\alpha,2}(t)}{h}
	&=\int_0^{t}\frac{1}{h}\left[Q_{\gamma,\alpha}(t+h-\tau, \cdot)-Q_{\gamma,\alpha}(t-\tau, \cdot)\right]\star g(\tau)\, \mathrm{d}\tau\\ &\quad +\frac{1}{h}\int_{t}^{t+h} Q_{\gamma,\alpha}(t+h-\tau, \cdot)\star g(\tau)\, \mathrm{d}\tau\\
	&=: I_5(t,h)+I_6(t,h).
\end{align*}
As for $g(t)\in H^{s+\theta_1}$,
in view of the previous arguments on $v_{0,\alpha}$, we only debate the case $\theta_1\in(0, \alpha]$. For $I_5(t,h)$, because of the way we have dealt with Lemma \ref{lemma 4.2} (i), we
have
\begin{align*}
	\begin{aligned}
		&\quad \Big\|\frac{1}{h}[Q_{\gamma,\alpha}(t+h-\tau, \cdot)-Q_{\gamma,\alpha}(t-\tau, \cdot)]\star g(\tau)\Big\|_{H^s} \\
		&\leq m_{\gamma}(t-\tau)\left[(1 \ast m_{\gamma})(t-\tau)\right]^{\frac{\theta_1}{\alpha}-1} \sup_{\zeta \in (0,T]}\|g(\zeta)\|_{H^{s+\theta_1}}.
	\end{aligned}
\end{align*}
Combining this, Lemma \ref{lemma 4.2} (i) with the Lebesgue's dominated convergence theorem, we conclude that
\begin{align*}
	\lim_{h\to 0^+} I_5(t,h) = - \int_0^t (-\Delta)^{\frac{\alpha}{2}}G_{\gamma,\alpha}(t-\tau, \cdot) \star g(\tau)\, \mathrm{d}\tau
\end{align*}
for $\theta_1>0$. For convenience we introduce the notation  $v_{\alpha,3}(t)=:- \int_0^t (-\Delta)^{\frac{\alpha}{2}}G_{\gamma,\alpha}(t-\tau, \cdot) \star g(\tau)\, \mathrm{d}\tau$.
For $I_6(t,h)$, we divide it into two parts
\begin{align*}
	I_6 (t,h)&= \frac{1}{h}\int_{0}^{h} Q_{\gamma,\alpha}(h-\tau, \cdot)\star g(t+\tau)\, \mathrm{d}\tau\\
	&=\frac{1}{h}\int_{0}^{h} \left[Q_{\gamma,\alpha}(h-\tau, \cdot)-1\right]\star g(t+\tau)\, \mathrm{d}\tau+\frac{1}{h}\int_{0}^{h}g(t+\tau)\, \mathrm{d}\tau.
\end{align*}
Notice that $H^{s+\theta_1}\hookrightarrow H^{s}$ for $\theta_1>0$ and the fact that
\begin{align*}
	&\quad\left\|\frac{1}{h}\int_{0}^{h} \left[Q_{\gamma,\alpha}(h-\tau, \cdot) - 1\right]\star g(t+h)\, \mathrm{d}\tau \right\|_{H^s} \\
	&\leq \frac{1}{h}\int_0^h [(1 \ast  m_\gamma)(h-\tau)]^{\frac{\theta_1}{\alpha}}\|g(t+h)\|_{H^{s+\theta_1}}\, \mathrm{d}\tau\\
	&\leq [(1 \ast  m_\gamma)(h)]^{\frac{\theta_1}{\alpha}}\sup_{\zeta \in (0,T]}\|g(\zeta)\|_{H^{s+\theta_1}}
	\to 0 \text{ ~as~ } h \to 0^+,
\end{align*}
where we have used Lemma \ref{lemma 4.1} (ii). This together with the continuity of
$g$ yields that $\lim_{h\to 0^+} I_6(t,h)= g(t)$. Thus we arrive at
\begin{align*}
	\partial_{t^+} v_{\alpha,2}(t) = v_{\alpha,3}(t)+g(t)~~{\rm for}~t\in(0,T].
\end{align*}
So similarly, we find $\partial_{t^-} v_{\alpha,2}(t) =v_{\alpha,3}(t)+g(t)$. Hence,
\begin{align}\label{4.11}
	\partial_{t} v_{\alpha,2}(t) = v_{\alpha,3}(t)+g(t) \quad{\rm for}~ t \in (0,T].
\end{align}

We further prove $v_{\alpha,3} \in C((0,T]; H^s)$. For arbitrary $t\in (0,T)$ fixed, we choose $0<h<\min\{\frac{t}{2}, \frac{T-t}{2}\}$, then consider
\begin{align*}
	{v_{\alpha,3}(t+h)-v_{\alpha,3}(t)}
	&=-\int_0^{t} (-\Delta)^{\frac{\alpha}{2}} \left[G_{\gamma,\alpha}(t+h-\tau, \cdot)-G_{\gamma,\alpha}(t-\tau, \cdot)\right]\star g(\tau)\, \mathrm{d}\tau\\
	&\quad -\int_{t}^{t+h} (-\Delta)^{\frac{\alpha}{2}}G_{\gamma,\alpha}(t+h-\tau, \cdot)\star g(\tau)\, \mathrm{d}\tau\\
	&=: I_7(t,h)+I_8(t,h).
\end{align*}
We deal with $I_7(t,h)$ and $I_8(t,h)$, respectively. For $I_8(t,h)$, we observe that \begin{align*}
	\|I_8(t,h)\|_{H^s}
	&\leq \int_t^{t+h} m_\gamma(t+h-\tau) \|g(\tau)\|_{H^s} \, \mathrm{d}\tau\\
	&\leq \sup_{\zeta \in (0,T]} \|g(\zeta)\|_{H^s} (1\ast m_\gamma)(h) \to 0 \text{ ~as~ } h \to 0^+.
\end{align*}
For $I_7(t,h)$, we can continue it in two cases: $\theta_1\in(0,\alpha]$ and $\theta_1\in(\alpha,\infty)$. Let $\theta_1\in(0,\alpha]$. We find from Remark \ref{lemma 3.4} (ii) that
\begin{align}\label{4.12}
	\begin{aligned}
		&\quad\left\|(-\Delta)^{\frac{\alpha}{2}}\left[G_{\gamma,\alpha}(t+h-\tau, \cdot)-G_{\gamma,\alpha}(t-\tau, \cdot)\right]\star g(\tau)\right\|_{H^s} \\
		&\leq 2m_{\gamma}(t-\tau)\left[(1 \ast m_{\gamma})(t-\tau)\right]^{\frac{\theta}{\alpha}-1} \sup_{\zeta \in (0,T]}\|g(\zeta)\|_{H^{s+\theta_1}}.
	\end{aligned}
\end{align}
Let $\theta_1 >\alpha$. Taking account of the fact that
\begin{align*}
	&(-\Delta)^{\frac{\alpha}{2}}\left[G_{\gamma,\alpha}(t+h-\tau, \cdot)-G_{\gamma,\alpha}(t-\tau, \cdot)\right]\star g(\tau)\\
	=&\mathcal F^{-1}\Big(|\xi|^\alpha\left[r_\gamma(t+h-\tau,|\xi|^\alpha) - r_\gamma(t-\tau,|\xi|^\alpha) \right] \tilde{g}(\tau,\xi)\Big)
\end{align*}
and Proposition \ref{prop 2.1} (iii), we can bound it by $2 m_{\gamma}(t-\tau)\tilde{g}(t,\xi)(1+|\xi|^2)^{\frac{\theta_1}{2}}$ since $m$ is nonincreasing. Then it follows that
\begin{align*}
	\left\|(-\Delta)^{\frac{\alpha}{2}}\left[G_{\gamma,\alpha}(t+h-\tau, \cdot)-G_{\gamma,\alpha}(t-\tau, \cdot)\right]\star g(\tau)\right\|_{H^s}
	\leq 2m_{\gamma}(t-\tau) \sup_{\zeta \in (0,T]}\|g(\zeta)\|_{H^{s+\theta_1}}.
\end{align*}
Combing this, \eqref{4.12} and strong continuity of  $(-\Delta)^{\frac{\alpha}{2}}G_{\gamma,\alpha}$, we derive
$\lim_{h \to 0^+} I_7(t,h)=0$ for $\theta_1>0$. This ensures the right-continuity of $v_{\alpha,3}$. Thus, the left-continuity of $u_{\alpha,3}$ follows from the similar way.
Recalling \eqref{4.11} we conclude that $\partial_t v_{\alpha,2} \in  C\left((0,T]; H^s \right)$.

As derived in \eqref{4.5} for the case of $ [m_\gamma \ast (-\Delta)^{\frac{\alpha}{2}}v_{\alpha,2}](t)$, one can indeed derive from Proposition \ref{prop 2.2} (ii) that
\begin{align*}
	[m_\gamma \ast (-\Delta)^{\frac{\alpha}{2}}v_{\alpha,2}](t)
	&= (-\Delta)^{\frac{\alpha}{2}}\left(m_\gamma \ast \int_0^\cdot Q_{\gamma,\alpha}(\cdot-\tau)\star g(\tau)\, \mathrm{d}\tau\right)(t)\\
	&= (-\Delta)^{\frac{\alpha}{2}}\left(1 \ast \int_0^\cdot G_{\gamma,\alpha}(\cdot-\tau)\star g(\tau)\, \mathrm{d}\tau\right)(t)\\
	&= \left(1 \ast (-\Delta)^{\frac{\alpha}{2}}\int_0^\cdot G_{\gamma,\alpha}(\cdot-\tau)\star g(\tau)\, \mathrm{d}\tau\right)(t),
\end{align*}
this ensures
\begin{align}\label{4.14}
	\partial_t [m_\gamma \ast (-\Delta)^{\frac{\alpha}{2}}v_{\alpha,2}](t)
	=(-\Delta)^{\frac{\alpha}{2}}\int_0^t G_{\gamma,\alpha}(t-\tau)\star g(\tau)\, \mathrm{d}\tau.
\end{align}
then it also shows $\partial_t [m_\gamma \ast (-\Delta)^{\frac{\alpha}{2}}v_{\alpha,2}] \in C((0,T];H^s)$. Similar to the arguments on \eqref{4.6}, we find $(-\Delta)^{\frac{\alpha}{2}}v_{\alpha,2} \in C\left((0,T]; H^s\right)$. The relation  $\partial_t [m\ast (-\Delta)^{\frac{\alpha}{2}} v_{\alpha,2}] =\frac{1}{\gamma}\partial_t [m_{\gamma}\ast (-\Delta)^{\frac{\alpha}{2}} v_{\alpha,2}]- (-\Delta)^{\frac{\alpha}{2}} v_{\alpha,2}$ leads to  $
\partial_t [m\ast (-\Delta)^{\frac{\alpha}{2}} v_{\alpha,2}] \in  C\left((0,T]; H^s \right)$. Therefore, it follows from \eqref{4.11} and \eqref{4.14} that $v_{\alpha,2}$ is a classical solution to the equation \eqref{4.10}.

\emph{Step 3.}	Given the above, $v_\alpha:=v_{\alpha,1}+v_{\alpha,2}$ is a classical solution to the equation \eqref{neweq 1.2}. Moreover, since the mild solution to the equation \eqref{neweq 1.2} is unique (see \cite[Definition 1.3]{J. Pruss}), and the classical solution given as Definition \ref{def 4.1} is also a mild solution, we conclude that the classical solution $v_\alpha$ is unique. The proof is completed.
\end{proof}

\begin{remark}\label{rem 4.1}
From the proof of Lemma \ref{newprop 2.2} we can find the continuity of $m$ on $\mathbb R_+$ provided $\beta_\gamma>0$. At this moment, we do not assume the continuity condition on $m$ in Theorem \ref{the 4.1}.
\end{remark}

\begin{remark}\label{rem 4.2}
It is well-mentioned that $m\in(\mathcal{H})$ in Theorem \ref{the 4.1} contains a wide range of kernel functions. For instance, we assume that one of the following conditions holds,
\begin{itemize}
	\item[{\rm(i)}] $m \in L^1_{loc}(\mathbb R_+)$ is completely monotonous, i.e., $m \in C^\infty(0,\infty)$ and $(-1)^nm^{(n)}(t) \geq 0$ for $t>0$, $n\in N_+$.
	\item[{\rm(ii)}] $m \in L^1_{loc}(\mathbb R_+)$ is non-negative and non-increasing, moreover, $\ln m$ is convex.
\end{itemize}
Then the conditions of $m$ in Theorem \ref{the 4.1} are satisfied.
For more details, we can refer to \cite[Example 3.3, 3.4, 3.5, 3.6]{F. Alegria}
and \cite{P. Clement 81}.
\end{remark}
\subsection{Convergence}

In this subsection we establish the convergent behavior of classical solutions as the parameter $\alpha$ converges to $2$. The following lemmas present the convergence behavior of $Q_{\gamma,\alpha}(t,\cdot)$ and $G_{\gamma,\alpha}(t,\cdot)$ when $\alpha \to 2^{-}$.
\begin{lemma}\label{lemma 5.1}
Let $\alpha \in (1,2)$, $s >\frac{N}{2}$ and $\beta \in[0,s-\frac{N}{2})$. Assume $m \in (\mathcal{H})$. Then there exist two constants $A_{s, \beta}$ and $A'_{s,\beta}$ such that
\begin{align*}	&\left\|(-\Delta)^{\frac{\beta}{2}}\left[Q_{\gamma,\alpha}(t,\cdot)-Q_{\gamma,2}(t,\cdot)\right]\right\|_{H^{-s}} \leq A_{s,\beta}[t+ \gamma(1\ast m)(t)]^{\varepsilon}(2-\alpha)\\
	&\left\|(-\Delta)^{\frac{\beta}{2}}\left[Q_{\gamma,\alpha}(t,\cdot)-Q_{\gamma,2}(t,\cdot)\right] \right\|_{H^{-s}} \geq
	A'_{s,\beta}\frac{1}{1+[k_\gamma(t)]^{-1}}\frac{(1\ast m_\gamma)(t)}{1+(1\ast m_\gamma)(t)}(2-\alpha)
\end{align*} for $t\in(0,\infty)$.
Especially, we can choose a constant $A'' \in (0, A'_{s,\beta})$ such that
\begin{align*}
	A''(2-\alpha)\leq \sup_{t \in (0,\infty)}\|Q_{\gamma,\alpha}(t,\cdot)-Q_{\gamma,2}(t,\cdot)\|_{H^{-s}} \leq A_{s,\beta}(2-\alpha).
\end{align*}
\begin{proof}
	we apply the mean value theorem (in the variable $\alpha$), and Lemma \ref{lemma 3.7} to obtain
	\begin{align*}
		\left|s_\gamma(t,|\xi|^\alpha)-s_\gamma(t,|\xi|^2)\right| \leq (2-\alpha) \left|\ln |\xi| \right|
	\end{align*}
	for every $|\xi| \neq 0$. Then we proceed to calculate that for $t>0$,
	\begin{align}\label{5.1}
		\begin{aligned} &\quad \left\|(-\Delta)^{\frac{\beta}{2}}\left[Q_{\gamma,\alpha}(t,\cdot)-Q_{\gamma,2}(t,\cdot)\right] \right\|_{H^{-s}}^2\\
			&= \int_{\mathbb R^N}\left(s_\gamma(t,|\xi|^\alpha)-s_\gamma(t,|\xi|^2)\right)^2\frac{|\xi|^{2\beta}}{(1+|\xi|^2)^s}\, \mathrm{d}\xi\\
			&\leq (2-\alpha)^2 \int_{\mathbb R^N} \frac{|\xi|^{2\beta}|\ln |\xi||^2}{(1+|\xi|^2)^s}\, \mathrm{d}\xi\\
			&\leq (2-\alpha)^2 \int_{\mathbb R^N} \frac{\left|\ln |\xi|\right|^2}{(1+|\xi|^2)^{s-\beta}}\, \mathrm{d}\xi\\
			&=A_{s,\beta}^2 (2-\alpha)^2.
		\end{aligned}
	\end{align}
	
	On the other hand, using the mean value theorem again and Lemma \ref{lemma 3.7}, we also have
	\begin{align*}
		\left|s_\gamma(t,|\xi|^\alpha)-s_\gamma(t,|\xi|^2) \right|
		\geq  \frac{1}{4} \frac{1}{1+[k_\gamma(t)]^{-1}}\frac{(1\ast m_\gamma)(t)}{1+(1\ast m_\gamma)(t)} \left| \ln |\xi| \right|, \quad |\xi| \in [\frac{1}{2}, 1].
	\end{align*}
	This implies that
	\begin{align}\label{5.2}
		\begin{aligned}
			&\quad
			\left\|(-\Delta)^{\frac{\beta}{2}}\left[Q_{\gamma,\alpha}(t,\cdot)-Q_{\gamma,2}(t,\cdot)\right] \right\|_{H^{-s}}\\
			&\geq  \frac{1}{4} \frac{1}{1+[k_\gamma(t)]^{-1}}\frac{(1\ast m_\gamma)(t)}{1+(1\ast m_\gamma)(t)} \left(\int_{\frac{1}{2}\leq |\xi| \leq 1} \frac{|\xi|^{2\beta}|\ln |\xi||^2}{(1+|\xi|^2)^s}\, \mathrm{d}\xi \right)^{\frac{1}{2}}(2-\alpha),\\
			&= A'_{s,\beta}\frac{1}{1+[k_\gamma(t)]^{-1}}\frac{(1\ast m_\gamma)(t)}{1+(1\ast m_\gamma)(t)}(2-\alpha)
		\end{aligned}
	\end{align}
	for every $t>0$.
	
	Finally, we take the upper bound on $t \in (0, \infty)$ in \eqref{5.1} and \eqref{5.2}. The proof is completed.
\end{proof}
\end{lemma}

\begin{lemma}\label{lemma 5.2}
Let $\alpha \in (1,2)$, $s >\frac{N}{2}$ and $\beta \in[0,s-\frac{N}{2})$. Assume $m \in (\mathcal{H})$ and $m$ is nonincreasing. Then there exists a constant $B_{s,\beta}$ such that
\begin{align*}
	\left\|(-\Delta)^{\frac{\beta}{2}}\left[G_{\gamma,\alpha}(t,\cdot) - G_{\gamma,2}(t,\cdot)\right] \right\|_{H^{-s}} \leq B_{s,\beta} m_\gamma\left(\frac{t}{2}\right)(2-\alpha)
\end{align*}
for $t\in(0,\infty)$.
\begin{proof}
	It immediately follows from  Lemma \ref{lemma 3.8} (for $\sigma=0$) and the mean value theorem that
	\begin{align*}
		&\quad \left\|(-\Delta)^{\frac{\beta}{2}}\left[G_{\gamma,\alpha}(t,\cdot) - G_{\gamma,2}(t,\cdot)\right] \right\|_{H^{-s}}\\
		&= \left(\int_{\mathbb R^N}\left[r_\gamma(t,|\xi|^\alpha)-r_\gamma(t,|\xi|^2)\right]^2\frac{|\xi|^{2\beta}}{(1+|\xi|^2)^s}\, \mathrm{d}\xi\right)^{\frac{1}{2}}\\
		&\leq (2-\alpha)m_\gamma \left( \frac{t}{2}\right) 4 \left(\int_{\mathbb R^N} \frac{\left|\ln |\xi|\right|^2}{(1+|\xi|^2)^{s-\beta}}\, \mathrm{d}\xi\right)^{\frac{1}{2}}\\
		&=B_{s,\beta}m_\gamma \left( \frac{t}{2}\right)(2-\alpha)
	\end{align*}
	for $t>0$. The proof is completed.
\end{proof}
\end{lemma}

\begin{lemma}\label{lemma 5.3}
Let $\alpha \in (1,2)$ and $s \geq \alpha+\frac{N}{2}$.	Assume $m \in (\mathcal{H})$ and $m$ is nonincreasing. Then there exists a constant $C_{s,\alpha}$ such that
\begin{align*}
	\left\|(-\Delta)^{\frac{\alpha}{2}}G_{\gamma,\alpha}(t,\cdot) - (-\Delta)G_{\gamma,2}(t,\cdot) \right\|_{H^{-s}} \leq C_{s,\alpha} m_\gamma\left(\frac{t}{2}\right)(2-\alpha)
\end{align*}
for $t>0$.
\begin{proof}
	Let $\alpha \in (1,2)$. From the mean value theorem we know that there exists $\varrho \in (1,\alpha)$ such that for arbitrary $\varepsilon \in (0,\varrho]$ satisfying $\varepsilon<s-\frac{N}{2}$, one has
	\begin{align*}
		&\quad \left||\xi|^\alpha r_\gamma(t,|\xi|^\alpha) - |\xi|^2 r_\gamma(t,|\xi|^2)\right|\\
		&= (2-\alpha)\left| r_\gamma(t,|\xi|^\varrho)|\xi|^\varrho \ln|\xi| +  |\xi|^{\varrho}\partial_\varrho r_\gamma(t,|\xi|^\varrho) \right|\\
		&\leq (2-\alpha)\left[r_\gamma(t,|\xi|^\varrho)|\xi|^{\varrho-\epsilon} |\xi|^{\epsilon}\left|\ln|\xi|\right| +  |\xi|^{\varrho-\epsilon}|\partial_\varrho r_\gamma(t,|\xi|^\varrho)| |\xi|^\epsilon \right]\\
		&\leq (2-\alpha) \left(m_\gamma(t) [(1\ast m_\gamma)(t)]^{\frac{\epsilon}{\varrho}-1}|\xi|^{\epsilon}\left|\ln|\xi|\right| +\frac{4\varrho}{\varepsilon} m_\gamma\left(\frac{t}{2} \right) \left[(1\ast m_\gamma)\left(\frac{t}{2}\right)\right]^{\frac{\epsilon}{\varrho}-1}|\xi|^\epsilon \right)\\
		&\leq (2-\alpha)m_\gamma\left(\frac{t}{2} \right) \left[(1\ast m_\gamma)\left(\frac{t}{2}\right)\right]^{\frac{\epsilon}{\varrho}-1}
		\left( |\xi|^{\epsilon}\left|\ln|\xi|\right| +\frac{4\varrho}{\varepsilon}  |\xi|^\epsilon \right)
	\end{align*}
	for $|\xi|\neq 0$, where we have used Lemma \ref{lemma 3.8} and Proposition \ref{prop 2.1} (iv).
	Taking $\varepsilon = \varrho$ in above inequality,	we proceed to calculate
	\begin{align*}
		&\quad \left\|(-\Delta)^{\frac{\alpha}{2}}G_{\gamma,\alpha}(t,\cdot) - (-\Delta)G_{\gamma,2}(t,\cdot) \right\|_{H^{-s}}\\
		&= \left(\int_{\mathbb R^N} \left[|\xi|^\alpha r_\gamma(t,|\xi|^\alpha) -|\xi|^2 r_\gamma(t,|\xi|^2)\right]^2 \frac{1}{(1+|\xi|^2)^s}\, \mathrm{d}\xi \right)^{\frac{1}{2}}\\
		&\leq (2-\alpha)m_\gamma\left(\frac{t}{2} \right)
		\left(\int_{\mathbb R^N} \frac{\left( \left|\ln|\xi|\right| +4 \right)^2|\xi|^{2\varrho}}{(1+|\xi|^2)^s}\, \mathrm{d}\xi \right)^{\frac{1}{2}}\\
		&\leq (2-\alpha)m_\gamma\left(\frac{t}{2} \right)
		\left(\int_{\mathbb R^N} \frac{\left( \left|\ln|\xi|\right| +4 \right)^2}{(1+|\xi|^2)^{s-\alpha}}\, \mathrm{d}\xi \right)^{\frac{1}{2}}\\
		&=C_{s,\alpha} m_\gamma\left(\frac{t}{2} \right)(2-\alpha).
	\end{align*}
	The proof is completed.
\end{proof}
\end{lemma}
Thanks to Theorem \ref{the 4.1}, it is known that the problem \eqref{neweq 1.2} for the case $\alpha\in(0,2)$ and $\alpha=2$ admit a unique classical solution $v_\alpha(t)$ and $v_2(t)$ with the initial $v_{0,\alpha}$ and $v_{0,2}$, respectively. Now we present some convergent behavior of $v_\alpha(t)$ as $\alpha\to 2^-$.
\begin{theorem}\label{the 5.1}
Let the conditions of Theorem \ref{the 4.1} be satisfied and $s> \frac{N}{2}$. We further assume that
\begin{align*}
	\|v_{0,\alpha} - v_{0,2}\|_{H^s} \leq (2-\alpha)^\kappa~~{\it for~ certain}~\kappa>0.
\end{align*}
Then for arbitrary $\beta \in[0,s-\frac{N}{2})$, there exists constants $D_{s}$ and $K_{s,\beta}$ such that
\begin{align*}
	\quad\sup_{0<t\leq T}\left\|(-\Delta)^{\frac{\beta}{2}}\left[v_\alpha(t) - v_2(t)\right] \right\|_{L^\infty}
	\leq K_{s,\beta}(2-\alpha)^\gamma+D_{s,\beta}(1+T)(2-\alpha),
\end{align*}
Moreover, if $s\geq \alpha+\frac{N}{2}$, then
\begin{align*}
	 \left\|\partial_t v_\alpha(t) - \partial_t v_2(t) \right\|_{L^\infty}
	\leq K_{s,\alpha}m_\gamma(t)(2-\alpha)^\gamma+ D'_{s,\beta}\left(m_\gamma\left(\frac{t}{2}\right)+(1\ast m_\gamma)\left(\frac{t}{2}\right) \right)(2-\alpha)
\end{align*}
for $t\in(0, T]$, where $D'_{s,\alpha}$ and $K_{s,\alpha}$ are two constants depending only on $s$ and $\alpha$.
\begin{proof}
	As derived \eqref{4.6}, we have that for $t\in(0,T]$,
	\begin{align*}
		(-\Delta)^{\frac{\beta}{2}}\int_0^t Q_{\gamma,\alpha}(t-\tau,\cdot)\star g(\tau)\, \mathrm{d}\tau = \int_0^t (-\Delta)^{\frac{\beta}{2}}Q_{\gamma,\alpha}(t-\tau,\cdot)\star g(\tau)\, \mathrm{d}\tau.
	\end{align*}
	Recalling the formula of $v_\alpha(t,\cdot)$, we immediately calculate
	\begin{align*}
		&\quad \sup_{0<t\leq T}\left\|(-\Delta)^{\frac{\beta}{2}}\left[v_\alpha(t) - v_2(t)\right] \right\|_{L^\infty}\\
		&\leq \sup_{0<t\leq T} \left\|(-\Delta)^{\frac{\beta}{2}}\left[Q_{\gamma,\alpha}(t,\cdot)\star v_{0,\alpha}- Q_{\gamma,2}(t,\cdot)\star v_{0,2}\right]\right\|_{L^\infty}\\
		&\quad + \sup_{0<t\leq T}\left\|\int_0^t (-\Delta)^{\frac{\beta}{2}} \left[Q_{\gamma,\alpha}(t-\tau, \cdot)- Q_{\gamma,2}(t-\tau,\cdot) \right]\star g(\tau)\, \mathrm{d}\tau \right\|_{L^\infty}\\
		&=: J_{\alpha,1}+J_{\alpha,2}.
	\end{align*}
	Then we continue to consider $J_{\alpha,1}$ and $J_{\alpha,2}$.
	For $J_{\alpha,1}$, it can be divided into two parts:
	\begin{align*}
		J_{\alpha,1} &\leq \sup_{0<t\leq T} \left\|(-\Delta)^{\frac{\beta}{2}}\left[Q_{\gamma,\alpha}(t,\cdot)- Q_{\gamma,2}(t,\cdot)\right]\star v_{0,\alpha}\right\|_{L^\infty}\\
		&\quad+\sup_{0<t\leq T} \left\|(-\Delta)^{\frac{\beta}{2}}Q_{\gamma,2}(t,\cdot)\star (v_{0,\alpha} - v_{0,2})\right\|_{L^\infty}=: J_{\alpha,11}+J_{\alpha,12}.
	\end{align*}
	Taking account of the assumption on $v_{0,\alpha}$, we get $\sup_{1<\alpha \leq 2} \left\|v_{0,\alpha}\right\|_{H^s}<\infty$. Applying Young inequality and Lemma \ref{lemma 5.1}, it follows that
	\begin{align*}
		J_{\alpha,11} &=\sup_{0<t\leq T} \left\|(I-\Delta)^{-\frac{s}{2}}(-\Delta)^{\frac{\beta}{2}}\left[Q_{\gamma,\alpha}(t,\cdot)- Q_{\gamma,2}(t,\cdot)\right]\star (I-\Delta)^{\frac{s}{2}}v_{0,\alpha}\right\|_{L^\infty}\\
		&\leq \sup_{0<t\leq T} \left\|(I-\Delta)^{-\frac{s}{2}}(-\Delta)^{\frac{\beta}{2}}\left[Q_{\gamma,\alpha}(t,\cdot)- Q_{\gamma,2}(t,\cdot)\right]\right\|_{L^2}  \left\| (I-\Delta)^{\frac{s}{2}}v_{0,\alpha}\right\|_{L^2}\\
		&\leq \sup_{0<t\leq T} \left\|(-\Delta)^{\frac{\beta}{2}}\left[Q_{\gamma,\alpha}(t,\cdot)- Q_{\gamma,2}(t,\cdot)\right]\right\|_{H^{-s}} \times \sup_{1<\alpha \leq 2} \left\|v_{0,\alpha}\right\|_{H^s}\\
		&\leq A_{s,\beta}(2-\alpha) \sup_{1<\alpha \leq 2} \left\|v_{0,\alpha}\right\|_{H^s}.
	\end{align*}
	As for $J_{\alpha,12}$, we also arrive at
	\begin{align*}
		J_{\alpha,12}&\leq \sup_{0<t\leq T} \left\|(-\Delta)^{\frac{\beta}{2}}Q_{\gamma,2}(t,\cdot)\right\|_{H^{-s}} \times \sup_{1<\alpha \leq 2} \left\|v_{0,\alpha}-v_{0,\alpha}\right\|_{H^s}\\
		&\leq \left(\int_{\mathbb R^N} \frac{1}{(1+|\xi|^2)^{s-\beta}}\, \mathrm{d}\xi \right)^{\frac{1}{2}}\left\|v_{0,\alpha} - v_{0,2}\right\|_{H^s}\\
		&\leq K_{s,\beta}(2-\alpha)^\gamma.
	\end{align*}
	
	For $J_{\alpha,2}$, using Young inequality and Lemma \ref{lemma 5.1} again, it yields
	\begin{align*}
		J_{\alpha,2}& \leq  \sup_{0<t\leq T}\int_0^t \left\|(I-\Delta)^{-\frac{s}{2}}(-\Delta)^{\frac{\beta}{2}}\left[Q_{\gamma,\alpha}(t-\tau, \cdot)- Q_{\gamma,2}(t-\tau,\cdot) \right]\star (I-\Delta)^{\frac{s}{2}}g(s)\right\|_{L^\infty}\, \mathrm{d}\tau \\
		&\leq \sup_{0<t\leq T}\int_0^t \left\|(-\Delta)^{\frac{\beta}{2}}\left[Q_{\gamma,\alpha}(t-\tau, \cdot)- Q_{\gamma,2}(t-\tau,\cdot)\right] \right\|_{H^{-s}} \left\|g(\tau)\right\|_{H^s}\, \mathrm{d}\tau \\
		&\leq A_{s,\beta}(2-\alpha) T\sup_{0<t\leq T}\left\|g(t)\right\|_{H^{s+\theta_1}}.
	\end{align*}
	Let us choose $D_{s,\beta} =A_{s,\beta}  \left(\sup_{1<\alpha \leq 2} \left\|v_{0,\alpha}\right\|_{H^s}+ \sup_{0<t\leq T}\left\|g(t)\right\|_{H^{s+\theta_1}}\right)$, the first inequality follows.
	
	On the other hand, we apply Lemma \ref{lemma 4.2} (i) and \eqref{4.11} to obtain
	\begin{align*}
		\left\|\partial_t v_\alpha(t) - \partial_t v_2(t) \right\|_{L^\infty}
		&\leq  \left\|\left[(-\Delta)^{\frac{\alpha}{2}} G_{\gamma,\alpha}(t-\tau, \cdot)-(-\Delta) G_{\gamma,2}(t-\tau,\cdot) \right]\star v_{0,\alpha} \right\|_{L^\infty}\\
		&\quad+ \left\|(-\Delta)G_{\gamma,2}(t,\cdot)\star (v_{0,\alpha}-v_{0,2}) \right\|_{L^\infty}\\
		&\quad + \left\|\int_0^t \left[(-\Delta)^{\frac{\alpha}{2}} G_{\gamma,\alpha}(t-\tau, \cdot)-(-\Delta) G_{\gamma,2}(t-\tau,\cdot) \right]\star g(\tau)\, \mathrm{d}\tau \right\|_{L^\infty}\\
		&:=J_{\alpha,3}+J_{\alpha,4}+J_{\alpha,5}.
	\end{align*}
	By the similar way as derived the first inequality and using Lemma \ref{lemma 5.3}, one can obtain the bounds of $J_{\alpha,i}~(i=3,4,5)$. Specifically,
	\begin{align*}
		J_{\alpha,3} \leq C_{s,\alpha} & m_\gamma\left(\frac{t}{2}\right)(2-\alpha)\sup_{1<\alpha \leq 2} \left\|v_{0,\alpha}\right\|_{H^s},~~~~
		J_{\alpha,4} \leq K_{s,\alpha}m_\gamma(t)(2-\alpha)^\gamma,\\
		&J_{\alpha,5} \leq 2C_{s,\alpha}(1\ast m_\gamma)\left(\frac{t}{2}\right)\sup_{0<t\leq T}\left\|g(t)\right\|_{H^{s+\theta_1}}(2-\alpha)
	\end{align*} for $t\in(0,T]$.
	Then we choose $D'_{s,\alpha}= C_{s,\alpha} \sup_{1<\alpha \leq 2} \left\|v_{0,\alpha}\right\|_{H^s} +2C_{s,\alpha}\sup_{0<t\leq T}\left\|g(t)\right\|_{H^{s+\theta_1}}$, the second inequality holds.
\end{proof}
\end{theorem}

\section{Mild solution and approximation of nonlinear problem}

In this section, we consider the nonlinear problem
\begin{align}\label{eq 6.1}
\begin{cases}
	\partial_{t}v_\alpha + (-\Delta)^{\frac{\alpha}{2}}
	v_\alpha + \gamma \partial_{t}(m \ast (-\Delta)^{\frac{\alpha}{2}} v_\alpha) = |v_\alpha|^p, \quad p>1, \ t>0, \ x \in \mathbb R^N, \\
	v_{\alpha}|_{t=0}=v_{0,\alpha}(x), \quad  x \in \mathbb R^N,
\end{cases}
\end{align}
Then we give a sufficient condition of global solutions to the problem \eqref{eq 6.1} and established the convergent behavior of solutions as the parameter $\alpha$ converges to $2$.

Now the concept of mild solutions to the problem \eqref{eq 6.1} is stated.
\begin{definition}\label{def 6.1}
Let $\alpha \in (1,2]$, $s\geq 0$ and $p>1$. Then function $v_\alpha \in C\left([0,\infty); H^s \right)$ is called a global mild solution to problem \eqref{eq 6.1}, if $v_\alpha(t)$ satisfies
\begin{align}\label{6.1}
	v_\alpha(t)= Q_{\gamma,\alpha}(t,\cdot) \star v_{0,\alpha} + \int_0^t Q_{\gamma,\alpha}(t-\tau, \cdot) \star |v_\alpha|^p \mathrm{d}\tau \quad{\it for}~~ (t,x) \in (0, \infty) \times \mathbb R^N.
\end{align}
Let $T>0$. The function $v_\alpha \in C\left([0,T]; H^s \right)$ is called a local mild solution to problem \eqref{eq 6.1}, if $v_\alpha(t)$ satisfies \eqref{6.1} for $(t,x) \in (0, T] \times \mathbb R^N$.
\end{definition}

Let us define the Banach space $E_\alpha$:
$$E_\alpha:=\{v \in C\left(\mathbb R_+; H^s\right): \|v\|_{E_\alpha} < \infty\}$$ with the norm
$
\|v\|_{E_\alpha}:=\sup_{t \in \mathbb R_+} \|v_\alpha(t)\|_{H^s} + \sup_{t \in \mathbb R_+} \left[(1\ast m_\gamma)(t)\right]^{\vartheta}\|v_\alpha(t)\|_{H^{s'+1}}.
$ for $s,\vartheta>0$.
In what follows, we present the main results.	
\begin{theorem}\label{the 6.1}
Assume $m \in (\mathcal{H})$. Let $\alpha \in (1,2]$, $p> \max \left\{\frac{3}{2},\alpha \right\}$ and $\frac{1}{2}<s<p-1$. Take $s_\alpha=s+1-\frac{\alpha-1}{p -1 }$ and $\theta_\alpha = \frac{\alpha(p-1)}{\alpha-1}$. Then for every $v_{0,\alpha} \in H^{s_\alpha}$ satisfying $\|v_{0,\alpha}\|_{H^{s_\alpha}} \leq \varpi$ with a certain $\varpi>0$, the problem \eqref{eq 1.1} admits a unique global mild solution $v_{\alpha}$ satisfying
\begin{align*}
	\sup_{t \in \mathbb R_+} \|v_\alpha(t)\|_{H^{s_\alpha}} + \sup_{t \in \mathbb R_+} \left[(1\ast m_\gamma)(t)\right]^{\frac{1}{\theta_\alpha}}\|v_\alpha(t)\|_{H^{s+1}}< 2(1+2^{\frac{\alpha-1}{\alpha(p-1)}})\varpi.
\end{align*}
If $v'_\alpha(t)$ is the solution corresponding to the initial value $v'_{0,\alpha}$ that satisfies $\|v'_{0,\alpha}\|_{H^{s_\alpha}} \leq \varpi$, then
\begin{align*}
	\|v_\alpha-v'_\alpha\|_{E_\alpha} \leq 2(1+2^{\frac{\alpha-1}{\alpha(p-1)}})\|v_{0,\alpha}-v'_{0,\alpha}\|_{E_\alpha}.
\end{align*}
\begin{proof}
	We use the the contraction mapping theorem to prove our conclusion. Take $\varpi>0$ and $\mu= (1+2^{\frac{\alpha-1}{2(p-\alpha)}})\varpi$. Define the closed ball $\Omega_{\alpha,\mu}:=\{v\in E_\alpha:\|v\|_{E_\alpha}\leq2\mu\}$. Construct the operator $W_\alpha$:
	\begin{align*}
		(W_\alpha v)(t)= Q_{\gamma,\alpha}(t,\cdot) \star v_{0,\alpha} + \int_0^t Q_{\gamma,\alpha}(t-\tau, \cdot) \star |v|^p \mathrm{d}\tau, \quad v \in \Omega_{\alpha,\mu}.
	\end{align*}
	\emph{Step 1.} We prove that $W_\alpha$ maps $\Omega_{\alpha,\mu}$ into itself.
For arbitrary $t\in [0,\infty)$ fixed, we put $0<h<1$ and then consider
	\begin{align*}
		&\quad\int_0^{t+h} Q_{\gamma, \alpha}(t+h-\tau, \cdot) \star |v(\tau)|^p\, \mathrm{d}\tau - \int_0^{t} Q_{\gamma, \alpha}(t-\tau, \cdot) \star |v'(\tau)|^p\, \mathrm{d}\tau\\
		&=\int_0^{t} (Q_{\gamma, \alpha}(t+h-\tau, \cdot) -Q_{\gamma, \alpha}(t-\tau, \cdot)) \star |v(\tau)|^p\, \mathrm{d}\tau + \int_{t}^{t+h} Q_{\gamma, \alpha}(t+h-\tau, \cdot) \star |v'(\tau)|^p\, \mathrm{d}\tau\\
		&= I_9(t,h)+I_{10}(t,h).
	\end{align*}
	From the assumptions of $s_\alpha$ and $s$, it can be inferred that $s_\alpha>s>\frac{1}{2}$. Then \cite[Lemma 2.6]{R. P. De Moura} shows that there exists a constant $L_0>0$ such that $\||v|^p \|_{H^{s_\alpha}} \leq L_0 \|v\|_{H^{s_\alpha}}^{p}$. This along with Lemma \ref{3.1} (i), Lemma \ref{lemma 4.1} (i) and the Lebesgue's dominated convergence theorem gives
	\begin{align*}
		&0\leq I_9(t,h) \leq L_0\|v\|^p_{H^{s_\alpha}}\int_0^t
		\ln \frac{(1\ast m_\gamma)(t+h-\tau)}{(1\ast m_\gamma)(t-\tau)}\, \mathrm{d}\tau \to 0 \text{ ~as~ } h \to 0^+,\\
		&0\leq I_{10}(t,h) \leq L_0\|v\|^p_{H^{s_\alpha}}h \to 0 \text{ ~as~ } h \to 0^+,
	\end{align*}
	which ensures the right-continuity of $\int_{0}^{t} Q_{\gamma, \alpha}(t-\tau, \cdot) \star |v'(\tau)|^p\, \mathrm{d}\tau$ on $\mathbb R_+$. Similarly, left continuity can also be derived. Furthermore, the Lemma \ref{lemma 4.1} (i) gives that $Q_{\gamma, \alpha}(t, \cdot) \star v_{0,\alpha} \in C\left(\mathbb R_+; H^{s_\alpha}\right)$. From the above we obtain $W_\alpha v \in C\left(\mathbb R_+; H^{s_\alpha}\right)$ for $v \in E_{\alpha}$.
	
	Due to Lemma \ref{lemma 3.1} (i), we get
	\begin{align*}
		&\sup_{t \in \mathbb R_+}\left\|Q_{\gamma, \alpha}(t, \cdot) \star v_{0,\alpha} \right\|_{H^{s_\alpha}} \leq \|v_{0,\alpha}\|_{H^{s_\alpha}},\\
		&\sup_{t \in \mathbb R_+}\left[(1 \ast m_\gamma)(t) \right]^{\frac{1}{\theta_\alpha}}\left\|Q_{\gamma, \alpha}(t, \cdot) \star v_{0,\alpha} \right\|_{H^{s+1}} \leq 2^{\frac{s+1-s_\alpha}{2}}\|v_{0,\alpha}\|_{H^{s_\alpha}}.
	\end{align*}
	For any $v_{0,\alpha} \in H^{s_\alpha}$ satisfying $\|v_{0,\alpha}\|_{H^{s_\alpha}}<\varpi$, it holds that
	\begin{align}\label{6.3}
		\left\|Q_{\gamma, \alpha}(t, \cdot) \star v_{0,\alpha} \right\|_{E_\alpha} \leq \mu.
	\end{align}
Since $\frac{s_\alpha-s}{\alpha}+\frac{p}{\theta_\alpha} = 1$, it follows from \cite[Lemma 2.7]{R. P. De Moura}, Lemma \ref{3.1} (i) and the choice of $s_\alpha$ that there exists a constant $L_1>0$ such that
	\begin{align*}
		\begin{aligned}
			&\quad\left\|\int_0^t Q_{\gamma, \alpha}(t-\tau, \cdot) \star |v(\tau)|^p)\, \mathrm{d}\tau \right\|_{H^{s_\alpha}}\\ &\leq \int_0^t \left\|Q_{\gamma,\alpha}(t-\tau, \cdot)\star |v(\tau)|^p \right\|_{H^{s_\alpha}} \, \mathrm{d}\tau\\
			&\leq  2^{\frac{s_\alpha-s}{2}}\int_0^t \left[(1 \ast m_\gamma)(t-\tau) \right]^{-\frac{s_\alpha-s}{\alpha}}\left\| |v(\tau)|^p \right\|_{H^{s}} \, \mathrm{d}\tau\\
			&\leq  2^{\frac{s_\alpha-s}{2}}L_1\int_0^t \left[(1 \ast m_\gamma)(t-\tau) \right]^{-\frac{s_\alpha-s}{\alpha}}\|v(\tau)\|^p_{H^{s+1}} \, \mathrm{d}\tau\\
			&\leq  2^{\frac{s_\alpha-s}{2}}L_1 \|v\|^p_{E_\alpha}\int_0^t \left[(1 \ast m_\gamma)(t-\tau) \right]^{-\frac{s_\alpha-s}{\alpha}}\left[(1\ast m_\gamma)(\tau)\right]^{-\frac{p}{\theta_\alpha}} \, \mathrm{d}\tau \quad \text{ ~for~ } t\geq 0.
		\end{aligned}
	\end{align*}
	Recall that $\frac{s_\alpha-s}{\alpha}+\frac{p}{\theta_\alpha}=1$. We proceed to estimate the integral
	\begin{align*}
		&\quad \int_0^t \left[(1 \ast m_\gamma)(t-\tau) \right]^{-\frac{s_\alpha-s}{\alpha}}\left[(1\ast m_\gamma)(\tau)\right]^{-\frac{p}{\theta_\alpha}} \, \mathrm{d}\tau\\
		&\leq \int_0^{\frac{t}{2}} m_\gamma(\tau)\left[(1 \ast m_\gamma)(t-\tau) \right]^{-\frac{s_\alpha-s}{\alpha}}\left[(1\ast m_\gamma)(\tau)\right]^{-\frac{p}{\theta_\alpha}} \, \mathrm{d}\tau\\
		&\quad+\int_{\frac{t}{2}}^t m_\gamma(t-\tau)\left[(1 \ast m_\gamma)(t-\tau) \right]^{-\frac{s_\alpha-s}{\alpha}}\left[(1\ast m_\gamma)(\tau)\right]^{-\frac{p}{\theta_\alpha}} \, \mathrm{d}\tau\\
		&\leq \frac{1}{1-\frac{s_\alpha-s}{\alpha}}+\frac{1}{1-\frac{p}{\theta_\alpha}} = \frac{\alpha^2(p-1)^2}{p(p-\alpha)(\alpha-1)}.
	\end{align*}
It immediately yields that
	\begin{align}\label{6.4}
		\begin{aligned}
			\left\|\int_0^t Q_{\gamma, \alpha}(t-\tau, \cdot) \star |v(\tau)|^p)\, \mathrm{d}\tau \right\|_{H^{s_\alpha}}\leq 2^{\frac{s_\alpha-s}{2}} \frac{\alpha^2(p-1)^2}{p(p-\alpha)(\alpha-1)}L_1 \|v\|^p_{E_\alpha}
		\end{aligned}
	\end{align}
for $t\geq 0$.
	Similarly, it can be inferred that
	\begin{align*}
		&\quad \left\|\int_0^t Q_{\gamma, \alpha}(t-\tau, \cdot) \star |v(\tau)|^p\, \mathrm{d}\tau \right\|_{H^{s+1}}\\
		&\leq \int_0^t \left\|Q_{\gamma,\alpha}(t-\tau, \cdot)\star |v(\tau)|^p \right\|_{H^{s+1}} \, \mathrm{d}\tau\\
		&\leq  \sqrt{2}\int_0^t \left[(1 \ast m_\gamma)(t-\tau) \right]^{-\frac{1}{\alpha}}\left\||v(\tau)|^p \right\|_{H^{s}} \, \mathrm{d}\tau\\
		&\leq  \sqrt{2}L_1 \|v\|^p_{E_\alpha}\int_0^t \left[(1 \ast m_\gamma)(t-\tau) \right]^{-\frac{1}{\alpha}}[(1\ast m_\gamma)(t)]^{-\frac{p}{\theta_\alpha}} \, \mathrm{d}\tau\\
		&= \sqrt{2}\frac{\alpha(p\alpha -2\alpha+1)}{(p-\alpha)(\alpha-1)}L_1  [(1\ast m_\gamma)(t)]^{1-\frac{1}{\alpha} - \frac{p}{\theta_\alpha}}\|v\|^p_{E_\alpha}
	\end{align*}
for $t>0$. The choice of $\theta$ gives $\frac{1}{\alpha}+\frac{p}{\theta_\alpha} = 1+\frac{1}{\theta_\alpha}$. This implies
	\begin{align}\label{6.5}
		\begin{aligned}
			&\quad \sup_{t \in \mathbb R_+} \left[(1 \ast m_\gamma)(t)\right]^{\frac{1}{\theta_\alpha}} \left\|\int_0^t Q_{\gamma, \alpha}(t-\tau, \cdot) \star |v(\tau)|^p\, \mathrm{d}\tau \right\|_{H^{s+1}}\\
			&\leq \sqrt{2}\frac{\alpha(p\alpha -2\alpha+1)}{(p-\alpha)(\alpha-1)}L_1 \|v\|^p_{E_\alpha}.
		\end{aligned}
	\end{align}
	Take
	$$\varpi\leq \frac{1}{1+2^{\frac{\alpha-1}{\alpha(p-1)}}}\left[2^{2p-1} \left(2^{\frac{s_\alpha-s}{2}}\frac{\alpha^2(p-1)^2}{p(p-\alpha)(\alpha-1)} + \sqrt{2}\frac{\alpha(p\alpha -2\alpha+1)}{(p-\alpha)(\alpha-1)} \right)L_1  \right]^{-\frac{1}{p-1}}.$$
	It follows from \eqref{6.3}, \eqref{6.4} and \eqref{6.5} that $\|W_\alpha v\|_{E_\alpha} \leq 2\mu \text{ ~for~ } v\in \Omega_{\alpha,\mu}$. Immediately, $W_\alpha$ maps $\Omega_{\alpha,\mu}$ into itself.
	
	\emph{Step 2.} We prove that $W_\alpha$ is a contraction operator. Using \cite[Lemma 2.7]{R. P. De Moura} and Lemma \ref{3.1} (i) again, it holds that for any $v, v' \in \Omega_{\alpha,\mu}$,
	\begin{align*}
		&\quad \|W_\alpha v - W_\alpha v'\|_{H^{s_\alpha}}\\
		&\leq \int_0^t \left\|Q_{\gamma, \alpha}(t-\tau, \cdot) \star \left(|v(\tau)|^p-|v'(\tau)|^p\right)\right\|_{H^{s_\alpha}}\, \mathrm{d}\tau \\
		&\leq  2^{\frac{s_\alpha-s}{2}}\int_0^t \left[(1 \ast m_\gamma)(t-\tau) \right]^{-\frac{s_\alpha-s}{\alpha}}\left\||v(\tau)|^p - |v'(\tau)|^p \right\|_{H^{s}} \, \mathrm{d}\tau\\
		&\leq  2^{\frac{s-\alpha-s}{2}}L_1\int_0^t \left[(1 \ast m_\gamma)(t-\tau) \right]^{-\frac{s_\alpha-s}{\alpha}}\|v(\tau)-v'(\tau)\|_{H^{s+1}}\left(\|v(\tau)\|_{H^{s+1}} + \|v'(\tau)\|_{H^{s+1}}\right)^{p-1} \, \mathrm{d}\tau\\
		&\leq  2^{\frac{s_\alpha-s}{2}}L_1 \|v-v'\|_{E_\alpha}\left(\|v\|_{E_\alpha} +\|v'\|_{E_\alpha} \right)^{p-1}\int_0^t \left[(1 \ast m_\gamma)(t-\tau) \right]^{-\frac{s_\alpha-s}{\alpha}}\left[(1\ast m_\gamma)(\tau)\right]^{-\frac{p}{\theta_\alpha}} \, \mathrm{d}\tau\\
		&= 2^{\frac{s_\alpha-s}{2}}L_1 \frac{\alpha^2(p-1)^2}{p(p-\alpha)(\alpha-1)}(4\mu)^{p-1}\|v-v'\|_{E_\alpha}.
	\end{align*}
	With similar arguments, we get
	\begin{align*}
		\sup_{t \in \mathbb R_+} \left[(1\ast m_\gamma)(t) \right]^{\frac{1}{\theta_\alpha}}\| W_\alpha v -W_\alpha v'\|_{H^{s+1}}<\sqrt{2}\frac{\alpha(p\alpha -2\alpha+1)}{(p-\alpha)(\alpha-1)}L_1 (4\mu)^{p-1} \|v-v'\|_{E_\alpha}.
	\end{align*}
	Recall the choice of $\mu$, we have
	\begin{align}\label{6.6}
		\|W_\alpha v-W_\alpha v'\|_{E_\alpha} \leq \frac{1}{2} \|v-v'\|_{E_\alpha}.
	\end{align}
	
	According to the contraction mapping theorem, the operator $W_\alpha$ admits a unique fixed point $v_\alpha$ in $\Omega_{\alpha,\mu}$, which is a global mild solution to the problem \eqref{eq 6.1}. If $v'_\alpha$ is the solution corresponding to the initial value $v'_{0,\alpha}$. As the similar arguments we derived \eqref{6.6}, it yields
	$$
	\|v_\alpha-v'_\alpha\|_{E_\alpha} \leq (1+2^{\frac{\alpha-1}{\alpha(p-1)}})\|v_0-v'_0\|_{H^{s_\alpha}}+\frac{1}{2}\|v-v'\|_{E_\alpha}.
	$$
	Then the continuous dependence follows. The proof is completed.
\end{proof}
\end{theorem}

\begin{theorem}\label{the 6.2}
Assume $m \in (\mathcal{H})$. Let $p>\max\{2, 1+\frac{N}{2}\}$, $s\in(\frac{N}{2},p-1)$ and $\alpha \in [1+\varepsilon_1,2]$ for $\varepsilon_1 \in (0,1)$. Take $s_\alpha=s+1-\frac{\alpha-1}{p -1 }$, $\theta_\alpha = \frac{\alpha(p-1)}{\alpha-1}$. If we can choose a certain $\varpi_T>0$ related to $T>0$ such that for any $v_{0,\alpha}$ satisfying $\|v_{0,\alpha}\|_{H^s}\leq \varpi_T$, we have
\begin{align*}
	\|v_{0,\alpha} - v_{0,2}\|_{H^{s_2}} \leq (2-\alpha)^\kappa~~{\it for~a~ certain}~\kappa>0,
\end{align*}
then there exist two constants $M_1$ and $M_2$ that depends only on $s, s, p, \varepsilon_1$, such that the problem \eqref{eq 1.1} admits
a global mild solution $v_\alpha$ corresponding to the initial value $v_{0,\alpha}$ satisfying
\begin{align*}
	\sup_{t \in [0,	T]} \|v_\alpha(t) - v_2(t)\|_{L^\infty} \leq M_1\left(1+[(1\ast m_\gamma)(T)]^{\frac{p-2}{2(p-1)}}\right)(2-\alpha) +M_2(2-\alpha)^\kappa.
\end{align*}
\begin{proof}
	Let $\varpi'\leq \left[2^{2p-1} \left(2^{\frac{s_\alpha-s}{2}}\frac{4(p-1)^2}{p(p-2)\varepsilon_1} + \sqrt{2}\frac{2(2p -3)}{(p-2)\varepsilon_1} \right)L_1  \right]^{-\frac{1}{p-1}}$, $\mu'=2\left(1+2^{\frac{1}{{2(p-1)}}}\right)\varpi'$ and
	\begin{align*}
		\varpi_T=\frac{\varpi'}{\max \left \{1, [(1\ast m_\gamma)(T)]^{\frac{2-\alpha}{2\alpha(p-1)}}\right\}},\ \mu_T=2\left(1+2^{\frac{1}{{2(p-1)}}}\right)\varpi_T.
	\end{align*}
	It follows from Theorem \ref{the 6.1} that the problem \eqref{eq 1.1} has a global mild solution $v_\alpha$ in $\Omega_{\alpha, \mu_T}$ for $\|v_{0,\alpha}\|_{H^{s_\alpha}}\leq \varpi_T$ with $\alpha \in [1+\varepsilon_1,2]$.
	
	Define the Banach space $E^T_{\alpha}:=\{v \in C\left([0,T]; H^{s_\alpha}\right): \|v\|_{E_{\alpha,T}} < \infty\}$ and the close ball $\Omega^T_{\alpha, \mu_T}=\{v \in E^T_{\alpha}:\|v\|_{E^T_{\alpha}} \leq 2\mu_T\}$, where
	\begin{align*}
		&\|v\|_{E^T_{\alpha}}:=\sup_{t \in [0,T]} \|v(t)\|_{H^{s_\alpha}} + \sup_{t \in [0,T]} \left[(1\ast m_\gamma)(t)\right]^{\frac{1}{\theta_\alpha}}\|v(t)\|_{H^{s+1}}.
	\end{align*}
	Notice $H^{s_\alpha} \hookrightarrow H^{s_2}$ for $\theta_\alpha >\theta_2$. This along with the choice of $\varpi_T$ gives
	$$v_\alpha \in C\left([0,T]; H^{s_2} \right)~~{\rm and}~ \|v_\alpha\|_{E^T_2} \leq \|v_\alpha\|_{E^T_\alpha} \leq \mu'.$$
	Immediately, $v_\alpha(t)$($t \in [0,T]$) is the unique local mild solution in $\Omega^T_{2,\mu'}$. As we derived the continuous dependency in Theorem \ref{the 6.1}, it is sufficient to prove
	\begin{align}\label{6.7}
		\|v_\alpha -v_2\|_{E_{2,T}}\leq 2(1+2^{\frac{1}{2(p-1)}}) \|v_{0,\alpha}-v_{0,2}\|_{H^{s_2}}.
	\end{align}
Indeed, we have
	\begin{align*}
		&\quad \sup_{t\in [0,T]}\left\|v_\alpha(t) - v_2(t)\right\|_{L^\infty}\\
		&\leq \sup_{t\in (0,T]} \left\|Q_{\gamma,\alpha}(t,\cdot)\star v_{0,\alpha}- Q_{\gamma,2}(t,\cdot)\star v_{0,2}\right\|_{L^\infty}\\
		&\quad + \sup_{t\in  (0,T]}\left\|\int_0^t \left(Q_{\gamma,\alpha}(t-\tau, \cdot)- Q_{\gamma,2}(t-\tau,\cdot) \right)\star |v_\alpha(\tau)|^p\, \mathrm{d}\tau \right\|_{L^\infty}\\
		&\quad + \sup_{t\in (0,T]}\left\|\int_0^t Q_{\gamma,2}(t-\tau,\cdot)\star \left(|v_\alpha(\tau)|^p - |v_2(\tau)|^p\right)\, \mathrm{d}\tau \right\|_{L^\infty}\\
		&:= J'_{\alpha,1}+J'_{\alpha,2} + J'_{\alpha,3}.
	\end{align*}
	
We consider $J'_{\alpha,1}$. Similar to the proof of Theorem \ref{the 5.1}, from the assumption of $v_{0,\alpha}$, Young' inequality, and Lemma \ref{lemma 5.1} (taking $\beta=0$) we arrive at
	\begin{align*}
		J'_{\alpha,1} \leq A_{s,0} \sup_{1+\varepsilon_1\leq\alpha \leq 2} \left\|v_{0,\alpha}\right\|_{H^{s_2}}(2-\alpha) + K_{s,0}(2-\alpha)^\kappa.
	\end{align*}
For $J'_{\alpha,2}$, we apply Young' inequality, Lemma \ref{lemma 5.1} along with \cite[Lemma 2.6]{R. P. De Moura} and $H^{s'}\hookrightarrow H^s$ once again to obtain
	\begin{align*}
		J'_{\alpha,2}& \leq  \sup_{t\in (0,T]}\int_0^t \left\|(I-\Delta)^{-\frac{s}{2}}\left[Q_{\gamma,\alpha}(t-\tau, \cdot)- Q_{\gamma,2}(t-\tau,\cdot) \right]\star (I-\Delta)^{\frac{s}{2}}|v_\alpha(\tau)|^p\right\|_{L^\infty}\, \mathrm{d}\tau \\
		&\leq \sup_{t\in (0,T]}\int_0^t \left\|Q_{\gamma,\alpha}(t-\tau, \cdot)- Q_{\gamma,2}(t-\tau,\cdot) \right\|_{H^{-s}} \left\||v_\alpha(\tau)|^p\right\|_{H^{s}}\, \mathrm{d}\tau \\
		&\leq A_{s,0}L_1\|v_\alpha\|^p_{E_\alpha}(2-\alpha)\sup_{t\in (0,T]} \int_0^t  [(1 \ast m_\gamma)(\tau)]^{-\frac{p}{2(p-1)}} \, \mathrm{d}\tau \\
		&\leq \frac{2(p-1)}{(p-2)}A_{s,0}L_1(2\mu_1)^p(2-\alpha) [(1 \ast m_\gamma)(T)]^{\frac{p-2}{2(p-1)}}.
	\end{align*}
	For $J'_{\alpha,3}$, from \eqref{6.7} we obtain
	\begin{align*}
		J'_{\alpha,3}& \leq  \sup_{0<t\leq T} \left\|\int_0^t Q_{\gamma,2}(t-\tau,\cdot)\star \left(|v_\alpha(\tau)|^p - |v_2(\tau)|^p\right)\, \mathrm{d}\tau \right\|_{H^{s_2}}\\
		& \leq \|v_\alpha -v_2\|_{E_2^T} \leq 2(1+2^{\frac{1}{2(p-1)}})(2-\alpha)^\kappa.
	\end{align*}
	Let $M_1:= A_{s',0} \sup_{1+\varepsilon_1\leq\alpha \leq 2} \left\|v_{0,\alpha}\right\|_{H^s} + \frac{2(p-1)}{(p-2)}A_{s',0}L_1(2\mu_1)^p,\ M_2:= K_{s,0}+ 2(1+2^{\frac{1}{2(p-1)}})$. The proof is completed.
\end{proof}
\end{theorem}

\noindent{\bf Acknowledgements}\\
This work was supported by National Natural Science Foundation of China (12471172).\\
\noindent{\bf Disclosure statement}\\
No potential conflict of interest was reported by the author.

\end{document}